\documentclass[11pt]{amsart}
\usepackage{amsmath,amsfonts,amscd,amssymb,epsfig,euscript,bm, cancel}
\usepackage{amsxtra,mathrsfs,xcolor} 
\usepackage{tikz}
\usetikzlibrary{decorations.markings}
\usepackage{caption}
\DeclareMathAlphabet{\mathpzc}{OT1}{pzc}{m}{it}
\usepackage{pdfsync}
\frenchspacing
\newtheorem{theorem}{Theorem}
\newtheorem{proposition}{Proposition}
\newtheorem{lemma}{Lemma}
\newtheorem{corollary}{Corollary}
\theoremstyle{definition}
{}

\theoremstyle{remark} 
\newtheorem{remark}{Remark}

\newcommand{\field}[1]{\ensuremath{\mathbb{#1}}}
\newcommand{\CC}{\field{C}}

\newcommand{\HH}{\field{H}}
\newcommand{\PP}{\field{P}}
\newcommand{\RR}{\field{R}}

\newcommand{\ZZ}{\field{Z}}

%
 %for differential
                      %complexes

%
 %for differential
                    %operators
%
%%%%%%%%%%%%%%%%%%%%%%%%%%%%%%%%%%%%%%%%%%%%%%%%%%%%%%%%%%%%
%
% operator names

%\DeclareMathOperator{\dim}{dim}
\DeclareMathOperator{\im}{Im}
%\DeclareMathOperator{\re}{Re}

% differentials
\newcommand{\del}{\partial}

\newcommand{\C}{{\mathbb{C}}}

\newcommand{\R}{\mathcal{R}}

\newcommand{\tr}{\mathrm{tr}}

\newcommand{\la}{\langle}
\newcommand{\ra}{\rangle}

\newcommand{\vep}{\varepsilon}

\newcommand{\z}{\bar{z}}

\newcommand{\curly}[1]{\mathscr{#1}}

\newcommand{\cK}{\curly{K}}

\newcommand{\cM}{\curly{M}}

\newcommand{\cP}{\curly{P}}

\newcommand{\cS}{\curly{S}}

\usepackage{hyperref}
\hypersetup{colorlinks,citecolor=blue,plainpages=false,hypertexnames=false}
\begin{document}
\title{On Kawai theorem for orbifold Riemann surfaces}
\author{Leon A. Takhtajan}
\address{Department of Mathematics,
Stony Brook University, Stony Brook, NY 11794 USA; Euler International Mathematical Institute, Pesochnaya Nab. 10, Saint Petersburg 197022 Russia}
\begin{abstract}
We prove a generalization of Kawai theorem for the case of orbifold Riemann surface. The computation is based on a formula for the differential of a holomorphic map from the cotangent bundle of the Teichm\"{u}ller space to the $\mathrm{PSL}(2,\CC)$-character variety, which allows to evaluate explicitly the pullback of Goldman symplectic form in the spirit of Riemann bilinear relations. As a corollary, we obtain a generalization of Goldman's theorem that the pullback of Goldman symplectic form on the $\mathrm{PSL}(2,\RR)$-character variety is a symplectic form of the Weil-Petersson metric on the Teichm\"{u}ller space.
\end{abstract}
\maketitle
\section{Introduction}
The deformation space of complex projective structures on a closed oriented genus $g\geq 2$ surface is a holomorphic affine bundle over the corresponding Teichm\"{u}ller space. The choice of a Bers section identifies the deformation space with the holomorphic cotangent bundle of the Teichm\"{u}ller space, a complex manifold with a complex symplectic form. Kawai's theorem \cite{Kawai} asserts that symplectic form on the cotangent bundle is a pulback under the monodromy map of Goldman's complex symplectic form on the corresponding $\mathrm{PSL}(2,\CC)$-character variety. 

However, Kawai's proof is not very insightful. In fact, he does not use Goldman symplectic form as defined in \cite{Goldman1}, but rather uses a symplectic form on the moduli space of special rank $2$ vector bundles on a Riemann surface associated with projective structures, as it is defined in \cite{Gunning}. The computation is highly technical and algebraic topology nature of the result gets obscured. Recently a shorter proof, relying on theorems of other authors, was given in \cite{Loustau}. Also in paper \cite{BKN} it is proved, using special homological coordinates, that canonical Poisson structure on the cotangent bundle of the Teichm\"{u}ller space induces the Goldman bracket on the character variety.

Here we prove a generalization of Kawai theorem for the case of orbifold Riemann surface. The computation is based on a formula for the differential of a holomorphic map from the cotangent bundle to the $\mathrm{PSL}(2,\CC)$-character variety, which allows to evaluate explicitly the pullback of Goldman symplectic form in the spirit of Riemann bilinear relations. As a corollary, we obtain a generalization of Goldman's theorem that the pullback of Goldman symplectic from on $\mathrm{PSL}(2,\RR)$-character variety is a symplectic form of the Weil-Petersson metric on the Teichm\"{u}ller space.

The paper is organized as follows. In Sect. \ref{1.1} we recall basic facts from the complex-analytic theory of Teichm\"{u}ller space $\mathcal{T}=T(\Gamma)$, where $\Gamma$ is a Fuchsian group of the first kind, and in Sect. \ref{1.2} we define the holomorphic symplectic form $\omega$ on the cotangent bundle $\cM=T^{\ast}\mathcal{T}$. In Sect. \ref{1.3} we introduce the  $\mathrm{PSL}(2,\CC)$-character variety $\cK$ associated with the Fuchsian group $\Gamma$,  and its holomorphic tangent space at $[\rho]\in\cK$, the parabolic Eichler cohomology group 
$H^{1}_{\mathrm{par}}(\Gamma,\frak{g}_{\mathrm{Ad}\rho})$. The Goldman symplectic form $\omega_{\mathrm{G}}$ on $\cK$ is introduced in Sect. \ref{1.4}, and  the holomorphic mapping $\mathcal{Q}: \cM\rightarrow\cK$, as well as the map $\mathcal{F}: \mathcal{T}\rightarrow\cK_{\RR}$, are defined in Sect. \ref{1.5}. In Section \ref{2} we explicitly compute the differential of the map $\mathcal{Q}$ in the fiber over the origin in $\mathcal{T}$. Lemma \ref{differential} neatly summarizes variational theory of the developing map in terms of the so-called $\Lambda$-operator, the classical third-order linear differential operator
$$\Lambda_{q}=\frac{d^{3}}{dz^{3}}+2q(z)\frac{d}{dz}+q'(z),$$ 
associated with the second-order differential equation
$$\frac{d^{2}\psi}{dz^{2}}+\frac{1}{2}q(z)\psi=0,$$
where $q$ is a cusp form of weight $4$ for $\Gamma$. Its properties are presented in $\bm{\Lambda 1}$--$\bm{\Lambda 5}$ (see also \textbf{B1}--\textbf{B3}).

The main result, Theorem \ref{Main},
$$\omega=-\sqrt{-1}\mathcal{Q}^{\ast}(\omega_{\mathrm{G}}),$$
is proved in Section \ref{3}. The proof uses Proposition \ref{Sum} and explicit description of a canonical fundamental domain for $\Gamma$ in Sect. \ref{3.1}. From here we obtain (see Corollary \ref{G-3}) 
$$\omega_{\mathrm{WP}}=\mathcal{F}^{\ast}(\omega_{\mathrm{G}}),$$
which is a generalization of Goldman theorem for orbifold Riemann surfaces.
\subsection*{Acknowledgements} I am grateful to Indranil Biswas for drawing my attention to Kawai theorem and its generalization to orbifold case. I am also grateful to  Bill Goldman and the anonymous referee for their remarks and suggestions to make the exposition more accessible.
\section{The basic facts}
\subsection{Teichm\"{u}ller space of a Fuchsian group} \label{1.1} Here we recall the necessary basic facts from the complex-analytic theory of Teichm\"{u}ller spaces (see classic paper \cite{Ahlfors} and book \cite{Ahlfors-2}, and also \cite{Nag, Wolpert-1}).
\subsubsection{} \label{1.1.1} Let $\Gamma$ be, in classical terminology, a Fuchsian group of the first kind with signature $(g;n,e_{1},\dots,e_{m})$, satisfying
$$ 2g-2 + n +\sum_{i=1}^{m}\left(1-\frac{1}{e_{i}}\right)>0.$$ 
By definition, $\Gamma$ is a finitely generated cofinite discrete
subgroup of $\mathrm{PSL}(2, \RR)$, acting on the Lobachevsky (hyperbolic) plane, the upper half-plane 
$$\HH=\{z=x+\sqrt{-1}y : y>0\}.$$ 
The group $\Gamma$ has a standard presentation with $2g$ hyperbolic generators $a_{1},b_{1},\dots,a_{g},b_{g}$, $m$ elliptic generators $c_1,\dots,c_{m}$ of orders $e_1,\dots,e_{m}$, and $n$ parabolic generators $c_{m+1},\dots,c_{n+m}$ satisfying the relation
$$a_{1}b_{1}a_{1}^{-1}b_{1}^{-1}\cdots a_{g}b_{g}a_{g}^{-1}b_{g}^{-1}c_{1}\cdots c_{m+n}=1.$$
The group $\Gamma$ can be thought of as a fundamental group of the corresponding orbifold Riemann surface $X\simeq\Gamma\backslash\HH$.
\subsubsection{}
\label{1.1.2} Let $\mathcal{A}^{-1,1}(\HH,\Gamma)$ be the space of Beltrami differentials for $\Gamma$ --- a complex Banach space of
$\mu\in L^{\infty}(\HH)$ satisfying
$$\mu(\gamma z)\,\frac{\overline{\gamma'(z)}}{\gamma'(z)}=\mu(z)\quad\text{for all}\quad\gamma\in\Gamma,$$
with the norm
$$\Vert\mu\Vert_{\infty}=\sup_{z\in\HH}|\mu(z)|.$$
For a Beltrami coefficient for $\Gamma$, $\mu\in \mathcal{A}^{-1,1}(\HH,\Gamma)$ with $\Vert\mu\Vert_{\infty}<1$,
denote by $w^{\mu}$ the solution of the Beltrami equation 
\begin{alignat*}{3}
w^{\mu}_{\z} & =\mu \,w^{\mu}_{z},&\quad  & z\in\HH,\\\
w^{\mu}_{\z} & =0,&\quad   & z\in\CC\setminus\HH,
\end{alignat*}
that fixes $0,1,\infty$, and put $\HH^{\mu}=w^{\mu}(\HH)$, $\Gamma^{\mu}=w^{\mu}\circ\Gamma\circ(w^{\mu})^{-1}$. The  Teichm\"{u}ller space $T(\Gamma)$ of a Fuchsian group $\Gamma$ is defined by
$$T(\Gamma)=\{\mu\in \mathcal{A}^{-1,1}(\HH,\Gamma) : \Vert\mu\Vert_{\infty}<1\}/\sim,$$
where $\mu\sim\nu$ if and only if $\left.w^{\mu}\right|_{\RR}=\left.w^{\nu}\right|_{\RR}$. Equivalently, $\mu\sim\nu$ if and only if $\left.w_{\mu}\right|_{\RR}=\left.w_{\nu}\right|_{\RR}$, where $w_{\mu}$ is a q.c. homeomorphism of $\HH$ satisfying the Beltrami equation
$$(w_{\mu})_{\z}=\mu(w_{\mu})_{z},\quad z\in \HH.$$
We denote by $[\mu]$ the equivalence class of a Beltrami coefficient $\mu$.

Teichm\"{u}ller space $T(\Gamma)$ is a complex manifold of complex dimension 
$$d=3g-3+m+n.$$
The holomorphic tangent  and cotangent spaces $T_{0}T(\Gamma)$ and  $T_{0}^{\ast}T(\Gamma)$ at the base point, the origin  $[0]\in T(\Gamma)$, are identified, respectively, with 
$\Omega^{-1,1}(\HH,\Gamma)$ --- the vector space of harmonic Beltrami differentials for $\Gamma$,  and with $\Omega^{2}(\HH,\Gamma)$ --- the vector space
of cusp forms of weight $4$ for $\Gamma$. The corresponding pairing $T_{0}^{\ast}T(\Gamma)\otimes T_{0}T(\Gamma)\rightarrow\CC$ is given by the absolutely convergent integral
$$\iint_{F}\mu(z)q(z)dxdy,$$
where $F$ is a fundamental domain for $\Gamma$. There is a complex anti-linear isomorphism $\Omega^{2}(\HH,\Gamma)\xrightarrow{\sim}\Omega^{-1,1}(\HH,\Gamma)$ given by $q(z)\mapsto \mu(z)=y^{2}\overline{q(z)}.$ Together with the pairing, it defines the Petersson inner product in  $T_{0}T(\Gamma)$,
$$(\mu_{1},\mu_{2})_{\mathrm{WP}}=\iint_{F}\mu_{1}(z)\overline{\mu_{2}(z)}y^{-2}dxdy.$$
There is a natural isomorphism between the Teichm\"{u}ller spaces $T(\Gamma)$ and $T(\Gamma_{\mu})$, where $\Gamma_{\mu}=w_{\mu}\circ\Gamma\circ w_{\mu}^{-1}$ is a Fuchsian group. For every 
$[\mu]\in T(\Gamma)$ it allows us to identify
$T_{[\mu]}T(\Gamma)$ with $\Omega^{-1,1}(\HH,\Gamma_{\mu})$ and $T_{[\mu]}^{\ast}T(\Gamma)$ with $\Omega^{2}(\HH,\Gamma_{\mu})$. 
The conformal mapping 
$$h_{\mu}=w_{\mu}\circ(w^{\mu})^{-1}: \HH^{\mu}\rightarrow\HH$$ 
establishes natural isomorphisms 
$$\Omega^{-1,1}(\HH,\Gamma_{\mu})\xrightarrow{\sim} \Omega^{-1,1}(\HH^{\mu},\Gamma^{\mu})\quad\text{and}\quad
\Omega^{2}(\HH,\Gamma_{\mu})\xrightarrow{\sim}\Omega^{2}(\HH^{\mu},\Gamma^{\mu}).$$
According to the isomorphism $T(\Gamma)\simeq T(\Gamma_{\mu})$, the choice of a base point is inessential and we will use the notation $\mathcal{T}$ for $T(\Gamma)$.

The Petersson inner product in the tangent spaces determines the Weil-Petersson K\"{a}hler metric on $\mathcal{T}$. Its  K\"{a}hler $(1,1)$-form is a  symplectic form $\omega_{\mathrm{WP}}$ on $\mathcal{T}$, 
\begin{equation} \label{WP-metric}
\omega_{\mathrm{WP}}(\mu_{1},\bar{\mu}_{2})=\frac{\sqrt{-1}}{2}\iint_{F}\left(\mu_{1}(z)\overline{\mu_{2}(z)}-\overline{\mu_{1}(z)}\mu_{2}(z)\right)y^{-2}dxdy,
\end{equation}
where $\mu_{1},\mu_{2}\in T_{0}\mathcal{T}$.
\subsubsection{} 
\label{1.1.3} Explicitly the complex structure on $\mathcal{T}$ is described as follows. Let $\mu_{1},\dots,\mu_{d}$ be a basis of $\Omega^{-1,1}(\HH,\Gamma)$. Bers' coordinates $(\vep_{1},\dots,\vep_{d})$ in the neighborhood $U$ of the origin in $\mathcal{T}$ are defined by $\Vert\mu\Vert_{\infty}<1$, where $\mu=\vep_{1}\mu_{1}+\cdots+\vep_{d}\mu_{d}$. For the corresponding vector
fields we have
$$\left.\frac{\del}{\del\vep_{i}}\right|_{\mu}=\bm{P}_{-1,1}\left(\left( \frac{\mu_{i}}{1-|\mu|^{2}}\,\frac{w^{\mu}_{z}}{\overline{w^{\mu}_{z}}}\right)\circ(w^{\mu})^{-1} \right)\in \Omega^{-1,1}(\HH^{\mu},\Gamma^{\mu}),$$
where $\bm{P}_{-1,1}$ is a projection on the subspace of harmonic Beltrami differentials. Let $p_{1},\dots, p_{d}$ be the basis in $\Omega^{2}(\HH,\Gamma)$, dual to the basis $\mu_{1},\dots,\mu_{d}$ for  $\Omega^{-1,1}(\HH,\Gamma)$.  For the holomorphic $1$-forms $d\vep_{i}$, dual to the vector fields $\dfrac{\del}{\del\vep_{i}}$ on $U$, we have $\left.d\vep_{i}\right|_{\mu}=p_{i}^{\mu}$,
where the basis $p_{1}^{\mu},\dots,p_{d}^{\mu}$ in $\Omega^{2}(\HH^{\mu},\Gamma^{\mu})$ has the property
$$\bm{P}_{2}\left(p_{i}^{\mu}\circ w^{\mu}\,(w^{\mu}_{z})^{2}\right)=p_{i},$$
with $\bm{P}_{2}$ being a projection on $\Omega^{2}(\HH,\Gamma)$.

\subsection{Holomorphic symplectic form}\label{1.2}

Let $\cM=T^{\ast}\mathcal{T}$ be the holomorphic cotangent bundle of $\mathcal{T}$ with the canonical projection $\pi: \cM\rightarrow \mathcal{T}$. It is a complex symplectic manifold with canonical $(2,0)$-holomorphic symplectic form $\omega=d\vartheta$, where $\vartheta$ is the Liouville $1$-form (also called a tautological $1$-form). At a point
$(q,[\mu])\in\cM$ it is defined as follows (e.g., see \cite{Arnold})
$$\vartheta(v)=q(\pi_{\ast}v),\quad v\in  T_{(q,[\mu])}\cM.$$ 
For the points in the fiber 
$\pi^{-1}(0)$ the symplectic form $\omega$ is given explicitly by
\begin{equation} \label{symp-1}
\omega((q_{1},\mu_{1}),(q_{2},\mu_{2}))=\iint_{F}(q_{1}(z)\mu_{2}(z)-q_{2}(z)\mu_{1}(z))dxdy,
\end{equation}
where 
$(q_{1},\mu_{1}), (q_{2},\mu_{2})\in T_{(q,0)}\cM\simeq T_{0}^{\ast}\mathcal{T}\oplus T_{0}\mathcal{T}$. 

\subsubsection{} \label{1.2.1}
Let $\theta(t)$ be a smooth curve in $\cM$ starting at 
$(q,0)\in\cM$ and lying in $T^{\ast}U$, where $U$ is a Bers neighborhood of the origin in $\mathcal{T}$. Correspondingly, $\mu(t)=\pi(\theta(t))$ is a smooth curve
in $U$ satisfying $\mu(0)=0$, and without changing the tangent vector to $\theta(t)$ at $t=0$ we can assume that $\mu(t)=t\mu$ for some $\mu\in\Omega^{-1,1}(\HH,\Gamma)$. We have
$$\theta(t)=\sum_{i=1}^{d}u^{i}(t)\left.d\vep_{i}\right|_{t\mu},$$ 
for small $t$ and 
$$\theta(0)=\sum_{i=1}^{d}u^{i}(0)p_{i}= q\in\Omega^{2}(\HH,\Gamma).$$
The tangent vector to
$\theta(t)$ at $t=0$ is  $(\dot{\theta},\mu)\in T_{(q,0)}\cM$, where 
$$\dot\theta=\sum_{i=1}^{d}\dot{u}^{i}(0)p_{i}.$$ %\quad\text{and}\quad \dot{u}^{i}(0)=\left.\frac{du^{i}(t)}{dt}\right|_{t=0},\quad i=1,\dots,d.$$
Here and in what follows the `over-dot'  denotes the derivative with respect to $t$ at $t=0$.  

Equivalently, the curve $\theta(t)$ is given by the smooth family $q^{t}\in\Omega^{2}(\HH^{t\mu},\Gamma^{t\mu})$ with $q^{0}=q$, and so
$$u^{i}(t)=\left(q^{t},\left.\frac{\del}{\del\vep_{i}}\right|_{t\mu}\right)=\iint_{F}q(t) \mu_{i}\,dxdy,$$
where
\begin{equation} \label{dot-q-1} 
q(t)=q^{t}\circ w^{t\mu}\,(w^{t\mu}_{z})^{2}
\end{equation}
is a pull-back of the cusp form $q^{t}$ on $\HH^{t\mu}$ to $\HH$ by the map $w^{t\mu}$. It is a smooth family of forms of weight $4$ for $\Gamma$ and 
%Denoting by $\dot{q}$ 
%he Lie derivative of the family $q^{t}$ at $t=0$ is $\dot{q}$
%$$\dot{q}=\left.\frac{d q(t)}{dt}\right|_{t=0},$$
%and 
$$\dot{u}^{i}(0)=\iint_{F}\dot{q}\mu_{i}\,dxdy,\quad i=1,\dots,d,$$
so that
$$\dot\theta=\bm{P}_{2}(\dot{q}).$$

\subsubsection{}\label{1.2.2}
To summarize, the value of the symplectic form \eqref{symp-1} on tangent vectors $(\dot\theta_{1},\mu_{1})$ and $(\dot\theta_{2},\mu_{2})$ to the curves $\theta_{1}(t)$ and $\theta_{2}(t)$  at $t=0$, is given by the following expression
\begin{equation}\label{omega-1}
\omega((\dot\theta_{1},\mu_{1}),(\dot\theta_{2},\mu_{2}))=\iint_{F}(\dot{q}_{1}\mu_{2}-\dot{q}_{2}\mu_{1})dxdy.
\end{equation}

\begin{remark} \label{q-dot-0} Though $\dot{q}$ is a non-holomorphic form of weight $4$ for $\Gamma$, it decays
exponentially at the cusps. Indeed, by conjugation it is sufficient to consider the cusp $\infty$. Since $w^{t\mu}(z+1)=w^{t\mu}(z)+c(t)$,
we have $q^{t}(z+c(t))=q^{t}(z)$ and
$$q(t)(z)=\sum_{n=1}^{\infty}a_{n}(t)e^{2\pi\sqrt{-1} n w^{t\mu}(z)/c(t)}w^{t\mu}_{z}(z)^{2},$$
where $a_{n}(t)$ are corresponding Fourier coefficients of $q^{t}(z)$.
Therefore 
\begin{align*}
\dot{q}(z)&=\sum_{n=1}^{\infty}\dot{a}_{n}e^{2\pi\sqrt{-1} n z}+2q(z)\dot{w}^{\mu}_{z} + q'(z)(\dot{w}^{\mu}(z)-\dot{c}),
\end{align*}
where prime always denotes the derivative with respect to $z$.
%$$\dot{w}^{\mu}(z)=\left.\dfrac{\del w^{t\mu}(z)}{\del t}\right |_{t=0},\quad\dot{a}_{n}=\left.\dfrac{da_{n}(t)}{dt}\right |_{t=0}\quad\text{and}\quad \dot{c}=\left.\dfrac{dc(t)}{dt}\right |_{t=0}.$$
Since $q(z)$ and $q'(z)$ decay exponentially as $y\rightarrow\infty$, we obtain
$$\dot{q}(z)=O(e^{-\pi y})\quad\text{as}\quad y\rightarrow\infty.$$
\end{remark}

\subsection{The character variety}\label{1.3} Here we recall necessary basic facts on the $\mathrm{PSL}(2,\CC)$-character variety for the fundamental group of the orbifold Riemann surface $X\simeq\Gamma\backslash\HH$.
\subsubsection{} \label{1.3.1} 
Let $\bm{G}$ be a Lie group $\mathrm{PSL}(2,\CC)$ and $\frak{g}=\frak{sl}(2,\CC)$ be its Lie algebra. As in \cite[\S2.3]{Goldman1}, we identify  $\frak{g}$
with the  Lie algebra of vector fields  $P(z)\dfrac{\del}{\del z}$ on $\HH$, where $P(z)\in \cP_{2}$ is a quadratic polynomial. 
Explicitly,
$$\frak{g}\ni\begin{pmatrix}a & b\\
c & -a
\end{pmatrix} \mapsto(cz^{2}-2az -b)\frac{\del}{\del z}\in \cP_{2}\,\frac{\del}{\del z}.$$ Let $\la~,~\ra$ denote a $1/4$ of the Killing form\footnote{Representing $\frak{g}$ by $2\times 2$ traceless matrices over $\CC$ gives $\langle x,y\rangle=\tr\, xy$.} of $\frak{g}$. In terms of the standard basis $\{1,z,z^{2}\}$ of $\cP_{2}$ the Killing form $\la~,~\ra$ is given by the matrix
$$
C=\begin{pmatrix} 0 & 0 & \!\!-1\\
0 & 1/2 & 0\\
\!-1 & 0 & 0\\
\end{pmatrix},$$
where $C_{ij}= \la z^{i-1},z^{j-1}\ra$, $i,j=1,2,3$. In general, for $P_{1}, P_{2}\in\cP_{2}$
\begin{equation}\label{Killing}
\la P_{1}, P_{2}\ra=-\frac{1}{2}B_{0}[P_{1},P_{2}](z),
\end{equation}
where for arbitrary smooth functions $F$ and $G$,
\begin{equation}\label{B-0}
B_{0}[F,G]=F_{zz}G+FG_{zz}-F_{z}G_{z}.
\end{equation}
Note that the right hand side of \eqref{Killing} does not depend on $z$.

\subsubsection{} 
\label{1.3.2}
As in \cite{Goldman1,Goldman2}, let $\cK$ be the $\bm{G}$-character variety of an orbifold Riemann surface $X$, 
$$\cK=\mathrm{Hom}_{0}(\Gamma, \bm{G})/\bm{G},$$
which consists of  irreducible homomorphisms
$\rho: \Gamma\rightarrow \bm{G}$, modulo conjugation, that preserve traces of parabolic and elliptic generators of $\Gamma$. The character variety $\cK$ is a complex manifold of complex dimension $2d=6g-6+2m+2n$, and the holomorphic tangent space $T_{[\rho]}\cK$ at $[\rho]$ is naturally identified with the parabolic Eichler cohomology group 
$$H^{1}_{\mathrm{par}}(\Gamma,\frak{g}_{\mathrm{Ad}\rho})=Z^{1}_{\mathrm{par}}(\Gamma,\frak{g}_{\mathrm{Ad}\rho})/B^{1}(\Gamma,\frak{g}_{\mathrm{Ad}\rho}).$$
Here $\frak{g}$ is understood as a left $\Gamma$-module with respect to the action $\mathrm{Ad}\rho$, and a $1$-cocycle $\chi\in Z^{1}(\Gamma,\frak{g}_{\mathrm{Ad}\rho})$ is a map $\chi: \Gamma\rightarrow \cP_{2}$ satisfying
\begin{equation} \label{par-1}
\chi(\gamma_{1}\gamma_{2})=\chi(\gamma_{1})+\rho(\gamma_{1})\cdot\chi(\gamma_{2}), \quad\gamma_{1},\gamma_{2}\in \Gamma,
\end{equation}
where dot stands for the adjoint action of $\bm{G}$ on $\frak{g}\simeq \cP_{2}\dfrac{\del}{\del z}$,
\begin{equation} \label{Ad-1}
(g\cdot P)(z)=\frac{P(g^{-1}(z))}{(g^{-1})'(z)},\quad g\in \bm{G},\;P\in \cP_{2}.
\end{equation} 
The parabolic condition, introduced in \cite{Weil}, means that the restriction of a $1$-cocycle $\chi\in Z^{1}(\Gamma,\frak{g}_{\mathrm{Ad}\rho})$  to a parabolic subgroup $\Gamma_{\alpha}$ of $\Gamma$ ---  the stabilizer of a cusp $\alpha$ for $\Gamma$ --- is a coboundary:
there is some $P_{\alpha}(z)\in\cP_{2}$ such that
$$\chi(\gamma)=\rho(\gamma)\cdot P_{\alpha} - P_{\alpha},\quad\gamma\in\Gamma_{\alpha}.$$
We denote by $[\chi]$ the cohomology class of a $1$-cocycle $\chi$.
\begin{remark} It is well-known (see \cite{Weil}) that the restriction of $\chi$ to a finite cyclic subgroup of $\Gamma$ is a coboundary. Indeed, if $\gamma^{n}=1$, then it follows from \eqref{par-1}
that
\begin{equation} \label{Ell}
0=\chi(\gamma^{n})=(1+\rho(\gamma)+\cdots+\rho(\gamma^{n-1}))\cdot\chi(\gamma).
\end{equation}
Using the unit disk model of the Lobachevsky plane, we can assume that $\gamma(u)=\zeta u$, where $\zeta^{n}=1$ and $|u|<1$.  It follows from
\eqref{Ad-1} and \eqref{Ell} that
$$\chi(\gamma)(u)=au^{2}+b,$$ 
and there is $P\in\cP_{2}$ with the property
$$\chi(\gamma)(u)=\zeta P(u/\zeta)-P(u).$$
\end{remark}

\subsection{The Goldman symplectic form}\label{1.4}
\subsubsection{} \label{1.4.1}
In case $X\simeq\Gamma\backslash\HH$ is a compact Riemann surface (the case $m=n=0$), Goldman \cite{Goldman1} introduced a complex symplectic form on the character variety $\cK$.  At a point $[\rho]\in\cK$ it is defined as
\begin{equation}\label{G}
\omega_{\mathrm{G}}([\chi_{1}],[\chi_{2}])=\la [\chi_{1}]\cup[\chi_{2}]\ra([X]),\quad\text{where}\quad[\chi_{1}],[\chi_{2}]\in T_{[\rho]}\cK.
\end{equation}
Here $[X]$ is the fundamental class of $X$ under the isomorphism $H_{2}(X,\ZZ)\simeq H_{2}(\Gamma,\ZZ)$, and $\la [\chi_{1}]\cup[\chi_{2}]\ra\in H^{2}(\Gamma,\RR)$ is a composition of the cup product in cohomology and of the Killing form. At a cocycle level it is given explicitly by
$$\la \chi_{1}\cup\chi_{2}\ra(\gamma_{1},\gamma_{2})=\la\chi_{1}(\gamma_{1}),\mathrm{Ad}\rho(\gamma_{1})\cdot\chi(\gamma_{2})\ra,\quad\gamma_{1},\gamma_{2}\in\Gamma.$$
Since the right-hand side in \eqref{G} does not depend on the choice of representatives $\chi_{1}, \chi_{2} \in Z^{1}(\Gamma,\frak{g}_{\mathrm{Ad}\rho})$ of the cohomology classes $[\chi_{1}], [\chi_{2}]\in H^{1}(\Gamma,\frak{g}_{\mathrm{Ad}\rho}) $, we will use the notation $\omega_{\mathrm{G}}(\chi_{1},\chi_{2})$.

According to \cite[Proposition 3.9]{Goldman1}\footnote{See also exercises 4(b) and 4(c) on p. 46 in \cite{Brown}.}, the fundamental class $[X]$ in terms of the group homology is realized by the following $2$-cycle 
\begin{equation} \label{2-cycle}
c=\sum_{k=1}^{g}\left\{\left(\frac{\del R}{\del a_{k}}, a_{k}\right) +\left(\frac{\del R}{\del b_{k}}, b_{k}\right)\right\}\in H_{2}(\Gamma,\ZZ),
\end{equation}
where $R=R_{g}$,
\begin{equation*} %\label{R-k}
R_{k} =  \prod_{i=1}^{k}a_{i}b_{i}a_{i}^{-1}b_{i}^{-1},\quad k=1,\dots, g,
\end{equation*}
and by the Fox free differential calculus
\begin{equation} \label{Fox-1}
\frac{\del R}{\del a_{k}}= R_{k-1}-R_{k}b_{k},\quad \frac{\del R}{\del b_{k}}= R_{k-1}a_{k}-R_{k}.
\end{equation}
In these notations \eqref{G} takes the form
\begin{gather} \label{G-2}
\omega_{\mathrm{G}}(\chi_{1},\chi_{2})=-\sum_{k=1}^{g}\left\la\chi_{1}\!\!\left(\# \frac{\del R}{\del a_{k}}\right),\chi_{2}(a_{k})\right\ra + \left\la\chi_{1}\!\!\left(\# \frac{\del R}{\del b_{k}}\right),\chi_{2}(b_{k})\right\ra,
\end{gather}
where a cocycle $\chi$ extends from a map on $\Gamma$ to a linear map defined on the integral group ring $\ZZ[\Gamma]$, and $\#$ denotes the natural anti-involution on $\ZZ[\Gamma]$, 
$$\#\left(\sum n_{j}\gamma_{j}\right)=\sum n_{j}\gamma_{j}^{-1}.$$
\begin{remark}\label{dual} We have
$$\# \frac{\del R}{\del a_{k}}=R_{k-1}^{-1}(1-\alpha_{k})\quad\text{and}\quad \# \frac{\del R}{\del b_{k}}=R_{k}^{-1}(1-\beta_{k}),$$
where $\alpha_{k}=R_{k}b^{-1}_{k}R_{k}^{-1}$ and $\beta_{k}=R_{k}a_{k}^{-1}R_{k-1}^{-1}$, are dual generators  of the group $\Gamma$ (see Sect. \ref{3.1.1}),
and expression \eqref{G-2} takes the form
\begin{gather*} %\label{G-2}
\omega_{\mathrm{G}}(\chi_{1},\chi_{2})=-\sum_{k=1}^{g}\left\la\chi_{1}(\alpha_{k}),\rho(R_{k-1})\!\cdot\!\chi_{2}(a_{k})\right\ra + \left\la\chi_{1}(\beta_{k}),\rho(R_{k})\!\cdot\!\chi_{2}(b_{k})\right\ra.
\end{gather*}
\end{remark}
\subsubsection{} \label{1.4.2} 
In case $m+n>0$, we define $R_{k}$, $k=1,\dots,g$, as before and put
$$R_{g+i}=R_{g}c_{1}\cdots c_{i},\quad i=1,\dots,m+n;\quad\quad R=R_{g+m+n}.$$
According to \cite{GR,GHJW,H,Kim}, the Goldman symplectic form $\omega_{\mathrm{G}}$ on the character variety $\cK$ associated with the fundamental group of an orbifold Riemann surface is defined as follows
\begin{gather}
\omega_{\mathrm{G}}(\chi_{1},\chi_{2})=
-\sum_{k=1}^{g}\left\la\chi_{1}\left(\# \frac{\del R}{\del a_{k}}\right),\chi_{2}(a_{k})\right\ra + \left\la\chi_{1}\left(\# \frac{\del R}{\del b_{k}}\right),\chi_{2}(b_{k})\right\ra\nonumber\\
-\sum_{i=1}^{m+n}\left\la\chi_{1}\left(\# \frac{\del R}{\del c_{i}}\right),\chi_{2}(c_{i})\right\ra- \sum_{i=1}^{m+n}\la\chi_{1}(c^{-1}_{i}),P_{2i} \ra,\label{G-non}
\end{gather}
where 
\begin{equation}\label{Fox-2}
\frac{\del R}{\del c_{i}}= R_{g+i-1},
\end{equation}
and $P_{2i}\in\cP_{2}$ are given by
$$\chi_{2}(\gamma)=\rho(\gamma)\cdot P_{2i}-P_{2i},\quad\gamma\in \Gamma_{i}=\la c_{i}\ra,\quad i=1,\dots,m+n.$$
As in the previous case, the right-hand side of \eqref{G-non} depends only on cohomology classes $[\chi_{1}], [\chi_{2}]\in H^{1}_{\mathrm{par}}(\Gamma,\frak{g}_{\mathrm{Ad}\rho})$. 
For details and the proof that it defines a symplectic form on $\cK$ we refer to \cite{GR,GHJW,H,Kim}. 

\subsection{The holomorphic map $\mathcal{Q}: \cM\rightarrow\cK$}\label{1.5}
The holomorphic map $\mathcal{Q}: \cM\rightarrow\cK$ is defined as follows. Let $(q,[\mu])\in\cM$, where $q\in\Omega^{2}(\HH^{\mu},\Gamma^{\mu})$.
On $\HH^{\mu}=w^{\mu}(\HH)$ consider the Schwarz equation
\begin{equation*} %\label{S-1}
\cS(f)=q,
\end{equation*}
where $\cS$ stands for the Schwarzian derivative,
$$\cS(f)=\frac{f'''}{f'}-\frac{3}{2}\left(\frac{f''}{f'}\right)^{2}.$$
Its solution, the developing map $f: \HH^{\mu}\rightarrow\PP^{1}=\CC\cup\{\infty\}$, satisfies
$$f\circ\gamma^{\mu}=\rho(\gamma)\circ f\quad\text{for all}\quad\gamma^{\mu}=w^{\mu}\circ\gamma\circ(w^{\mu})^{-1}\in\Gamma^{\mu},$$
and determines $[\rho]\in\mathrm{Hom}_{0}(\Gamma, \bm{G})/\bm{G}$. 

Indeed, $f$ can be obtained as a ratio of two linearly independent solutions of the differential equation
\begin{equation}\label{0-energy}
\psi''+\frac{1}{2}q(z)\psi=0.
\end{equation}
Since $q$ is a cusp form of weight $4$ for $\Gamma^{\mu}$, a simple application of the Frobenius method (e.g., see \cite{Hille})  to \eqref{0-energy} at cusps and elliptic fixed points shows that $\rho$ preserves traces of parabolic and elliptic generators of $\Gamma$. Namely, the substitution $\zeta=e^{2\pi\sqrt{-1}z}$ sends the cusp $\infty$ to $\zeta=0$ and transforms \eqref{0-energy} to a second order linear differential equation with regular singular point at $\zeta=0$. The characteristic equation has a double root $r=0$, which corresponds to a parabolic monodromy, and similar analysis applies to
elliptic fixed points. 

Since the representation $\rho$ is irreducible \cite{Gunning-2, Tyurin}, we have $[\rho]\in\cK$, which allows us to define the holomorphic map $\mathcal{Q}$ by
$$\cM\ni  (q,[\mu])\mapsto \mathcal{Q}(q,[\mu])=[\rho]\in\cK.$$
\begin{remark} \label{Embed} Besides the holomorphic embedding $\mathcal{T}\hookrightarrow\cM$ given by the zero section, there is a smooth non-holomorphic embedding $\imath: \mathcal{T}\rightarrow\cM$, given by
$$\mathcal{T}\ni [\mu]\mapsto (\cS(h_{\mu}),[\mu])\in\cM,$$
where $h_{\mu}=w_{\mu}\circ (w^{\mu})^{-1}$ (see Sect. \ref{1.1.2}).
The image of the smooth curve $\{[t\mu]\}$ on $\mathcal{T}$ under the map $\mathcal{F}=\mathcal{Q}\circ\imath$ --- the curve $\{\Gamma_{t\mu}\}$ on $\cK$ --- lies in the real subvariety $\cK_{\RR}$ of $\cK$, the character variety for $\bm{G}_{\RR}=\mathrm{PSL}(2,\RR)$. 
\end{remark}

\section{Differential of the map $\mathcal{Q}$} \label{2}
\subsection{The set-up} \label{2.1}
Consider a smooth curve $\theta(t)$ on $\cM$, %that starts at $(q,0)\in\cM$ and is 
defined in Sect. \ref{1.2.1}. 
%by the family $q^{t}\in\Omega^{2}(\HH^{t\mu},\Gamma^{t\mu})$, where $q^{0}=q\in\Omega^{2}(\HH,\Gamma)$, $\mu\in\Omega^{-1,1}(\HH,\Gamma)$ and $0\leq t<1/\Vert\mu\Vert_{\infty}$. 
Its image under the map $\mathcal{Q}$ is a smooth curve on $\cK$, given by the family $\{[\rho^{t}]\}$, where $[\rho^{0}]=[\rho]=\mathcal{Q}(q,0)\in\cK$. According to Sect. \ref{1.5},
$$\rho^{t}(\gamma)=f^{t}\circ\gamma^{t\mu}\circ (f^{t})^{-1}\quad\text{for all}\quad\gamma^{t\mu}\in\Gamma^{t\mu}.$$
The maps $f^{t}: \HH^{t\mu}\rightarrow\PP^{1}$ are defined by
\begin{equation} \label{Schwarz-2}
\cS(f^{t})=q^{t},
\end{equation}
where $f^{0}=f:\HH\rightarrow\PP^{1}$ satisfies
$$\cS(f)=q$$
and
$$f\circ\gamma=\rho(\gamma)\circ f \quad\text{for all}\quad\gamma\in\Gamma.$$

Put $g^{t}=f^{t}\circ w^{t\mu}:\HH\rightarrow\PP^{1}$. It follows from \eqref{Schwarz-2} that
\begin{align} \label{S-3}
\cS(g^{t}) &=\cS(f^{t})\circ w^{t\mu}(w^{t\mu}_{z})^{2}+\cS(w^{t\mu})
=q(t)+\cS(w^{t\mu}),
\end{align}
where $q(t)$ is a non-holomorphic form of weight $4$ for $\Gamma$, given by \eqref{dot-q-1}.
Differentiating with respect to $t$ at $t=0$ the equation 
$$g^{t}\circ\gamma=\rho^{t}(\gamma)\circ g^{t}$$
we get
$$\dot{g}\circ\gamma=\dot{\rho}(\gamma)\circ f+\rho(\gamma)'\circ f\,\dot{g},$$ 
%where
%$$\dot{g}=\left.\frac{d g^{t}}{dt}\right|_{t=0}\quad\text{and}\quad\dot{\rho}(\gamma)=\left.\frac{d\rho^{t}(\gamma)}{dt}\right|_{t=0}. $$ 
and using the equation 
$$\rho(\gamma)'\circ f\, f'=f'\circ\gamma\,\gamma',$$
we obtain
$$\frac{1}{\gamma'}\,\frac{\dot{g}}{f'}\circ\gamma=\frac{\dot{g}}{f'} +\frac{1}{f'}\,\frac{\dot{\rho}(\gamma)}{\rho(\gamma)'}\circ f.$$
For the corresponding cocycle $\chi$, representing a tangent vector to the curve $[\rho^t]$ at $t=0$, we have
$$\chi(\gamma)=\dot{\rho}(\gamma)\circ \rho(\gamma)^{-1}=-\frac{\dot{\rho}(\gamma^{-1})}{(\rho(\gamma)^{-1})'},$$
so that
\begin{equation}\label{chi-3}
\frac{1}{f'}\,\chi(\gamma^{-1})\circ f = \frac{\dot{g}}{f'} - \frac{1}{\gamma'}\,\frac{\dot{g}}{f'}\circ\gamma.
\end{equation}

Indeed, it immediately follows from \eqref{chi-3} that $\chi\in Z^{1}(\Gamma,\frak{g}_{\mathrm{Ad}\rho})$. To show that $\chi$ is a parabolic cocycle, it is sufficient to check it for the subgroup $\Gamma_{\infty}$ generated by $\tau=\left(\begin{smallmatrix}1 & 1\\ 0 & 1\end{smallmatrix}\right)$, which corresponds to the cusp at $\infty$. We can assume that the maps $f^{t}$ fix $\infty$, so that the maps $g^{t}=f^{t}\circ w^{t\mu}$ also have this property,
$$g^{t}(z+1)=g^{t}(z)+c(t).$$
Thus $\dot{g}(z+1)=\dot{g}(z) +\dot{c}$ and $\chi(\tau)=\dot{c}$. Whence there is $P\in\cP_{2}$ such that $\chi(\tau)=P\circ\tau-P$. 
%indeed  $\left.\chi\right|_{\Gamma_{\infty}}$ is a coboundary.

\subsection{Differential equation and the $\Lambda$-operator}\label{2.2}
 From \eqref{S-3} it is easy to obtain a differential equation for $\dot{g}$. Namely, differentiate equation \eqref{S-3} with respect to $t$ at $t=0$. Using $g^{0}=f$ and $\dot{w}^{\mu}_{zzz}=0$ for $\mu\in\Omega^{-1,1}(\HH,\Gamma)$, 
 which follows from classic Ahlfors' formula in \cite{Ahlfors}, we get
\begin{equation*} 
\dot{q} =\left.\frac{d}{dt}\right|_{t=0}\cS(g^{t})  = \frac{\dot{g}_{zzz}}{f'}-3\frac{f''}{f'^{2}}\,\dot{g}_{zz}+\left(3\frac{f''^{2}}{f'^{3}}-\frac{f'''}{f'^{2}}\right)\dot{g}_{z}.
\end{equation*}
Since $q=\cS(f)$, a simple computation shows that this equation can be written neatly as follows
\begin{equation} \label{dot-q-2}
\Lambda_{q}\left(\frac{\dot{g}}{f'}\right)=\dot{q},
\end{equation}
where $\Lambda_{q}$ is the following linear differential operator of the third order,
$$\Lambda_{q}(F)(z)=F_{zzz}+2q(z)F_{z}+\ q'(z)F.$$
In case $q=0$ the operator $\Lambda_{0}$ is just a third derivative operator.
The $\Lambda$-operator is classical and goes back to Appell (see \cite[Example 10 in \S14.7]{W-Watson}). Its basic properties are summarized below.

\begin{enumerate}
\item[$\bm{\Lambda 1.}$] If $\psi_{1}$ and $\psi_{2}$ are solutions of the ordinary differential equation \eqref{0-energy},
then
$$\Lambda_{q}(\psi_{1}\psi_{2})=0.$$
Since for $q=\cS(f)$ one can always choose $\psi_{1}=\dfrac{1}{\sqrt{f'}}$ and $\psi_{2}=\dfrac{f}{\sqrt{f'}}$,
$$\Lambda_{q}\left(\frac{P\circ f}{f'}\right)=0$$
for every $P\in\cP_{2}$.
\item[$\bm{\Lambda 2.}$] If a function $h$ satisfies $\Lambda_{0}(h)=p$ and $f$ is holomorphic and locally schlicht, then $H=\dfrac{h\circ f}{f'}$ satisfies
$$\Lambda_{q}(H)=P,$$
where $q=\cS(f)$ and $P=p\circ f(f')^{2}$.
\item[$\bm{\Lambda 3.}$] If $q\circ\gamma\,(\gamma')^{2}=q$ for some $\gamma\in\bm{G}$, then
$$\Lambda_{q}\left(\frac{F\circ\gamma}{\gamma'}\right)=\Lambda_{q}(F)\circ\gamma\,(\gamma')^{2}.$$
\item[$\bm{\Lambda 4.}$] 
The general solution of the equation
$$\Lambda_{q}(G)=Q,$$ %G_{zzz}+2q(z)G_{z}+q'(z)G=Q,$$
where $q=\cS(f)$ and  $Q$ is holomorphic on $\HH$, is given by
$$G(z)=\frac{1}{2}\int_{z_{0}}^{z}\frac{(f(z)-f(u))^{2}}{f'(z)f'(u)}Q(u)du +\frac{1}{f'(z)}(af(z)^{2}+bf(z)+c),$$
where $a,b,c$ are arbitrary anti-holomorphic functions of $z$.
\item[$\bm{\Lambda 5.}$]
$$\Lambda_{q}(F)G+F\Lambda_{q}(G)=(B_{q}[F,G])_{z},$$
where the bilinear form $B_{q}$ is given by
$$B_{q}[F,G]=F_{zz}G+FG_{zz} -F_{z}G_{z}+ 2q(z)FG.$$
\end{enumerate}
All these properties are well-known and can be verified by direct computation. In particular, property $\bm{\Lambda 4}$, according
to $\bm{\Lambda 2}$, follows from case $q=0$, when the equation $\Lambda_{0}(G)=Q$
is readily solved by
$$G(z)=\frac{1}{2}\int_{z_{0}}^{z}(z-u)^{2}Q(u)du + az^{2}+bz+c.$$

Bilinear form $B_{q}$, introduced in $\bm{\Lambda 5}$, will play an important role in our approach. It has the following properties.
\begin{itemize}
\item[$\bf{B 1.}$] %Explicitly denoting the $q$-dependence in $B$ by $B_{q}$, 
We have
$$B_{q}\left[\frac{F\circ f}{f'},\frac{G\circ f}{f'}\right]=B_{0}[F,G]\circ f,$$
where $q=\cS(f)$. In general,
 \begin{equation*}
 \left(B_{\cS(f_{1})}[F, G]\right)\circ f_{2} =B_{\cS(f_{1}\circ f_{2})}\left[\frac{F\circ f_{2}}{f_{2}'},\frac{G\circ f_{2}}{f_{2}'}\right].
 \end{equation*}
\item[$\bf{B 2.}$] If $q\circ\gamma\,(\gamma')^{2}=q$ for some $\gamma\in\bm{G}$, then
\begin{equation*} 
B_{q}[F,G]\circ\gamma=B_{q}\left[\frac{F\circ \gamma}{\gamma'},\frac{G\circ \gamma}{\gamma'}\right].
\end{equation*}
\item[$\bf{B 3.}$] If 
$(F\circ\gamma)\dfrac{\overline{\gamma'}}{\gamma'}=F$ for some $\gamma\in\bm{G}$, then
$$B[F,G] - B[F,G]\circ\gamma \,\overline{\gamma'}=B[F,H],\quad\text{where}\quad H=G-\frac{G\circ\gamma}{\gamma'}.$$
\end{itemize}

\subsection{The differential} \label{2.3}
We summarize the obtained results in the following statement.
\begin{lemma} \label{differential} Let $(\dot\theta,\mu)\in T_{(q,0)}\cM$, where $\dot\theta=P_{2}(\dot{q})$, be a tangent vector corresponding to a curve $\{q^{t}\}$.  For a representative $\chi$ of the cohomology class
$$[\chi]=\left.d\mathcal{Q}\right|_{(q,0)}(\dot\theta,\mu)\in H^{1}_{\mathrm{par}}(\Gamma,\frak{g}_{\mathrm{Ad}\rho}),$$
we have
$$\frac{1}{f'}\,\chi(\gamma^{-1})\circ f = \frac{\dot{g}}{f'} - \frac{1}{\gamma'}\,\frac{\dot{g}}{f'}\circ\gamma,$$
where $\dfrac{\dot{g}}{f'}$ satisfies
$$\Lambda\left(\frac{\dot{g}}{f'}\right)=\dot{q},\quad \frac{\del}{\del\z}\left(\frac{\dot{g}}{f'}\right)=\mu.$$
\end{lemma}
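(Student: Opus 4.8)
The statement collects three facts about the tangent vector $\dot{g}/f'$, two of which are already established in Sections \ref{2.1}--\ref{2.2}; the plan is to re-derive them in sequence and to supply the one remaining Beltrami identity. First I would obtain the cocycle formula. Differentiating the equivariance relation $g^{t}\circ\gamma=\rho^{t}(\gamma)\circ g^{t}$ at $t=0$ and substituting the identity $\rho(\gamma)'\circ f\,f'=f'\circ\gamma\,\gamma'$, one isolates the cocycle $\chi(\gamma)=\dot\rho(\gamma)\circ\rho(\gamma)^{-1}$ and arrives at \eqref{chi-3}. The shape of \eqref{chi-3} shows at once that $\chi\in Z^{1}(\Gamma,\frak{g}_{\mathrm{Ad}\rho})$, and parabolicity is checked on the generator $\tau$ of $\Gamma_{\infty}$: normalizing $f^{t}$ to fix $\infty$ gives $g^{t}(z+1)=g^{t}(z)+c(t)$, so that $\chi(\tau)=\dot{c}$ is constant, hence a coboundary in $\cP_{2}$. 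This places $[\chi]$ in $H^{1}_{\mathrm{par}}(\Gamma,\frak{g}_{\mathrm{Ad}\rho})$.

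Next I would record the differential equation. Differentiating the Schwarzian relation \eqref{S-3} at $t=0$, using $g^{0}=f$ together with Ahlfors' vanishing $\dot{w}^{\mu}_{zzz}=0$ valid for $\mu\in\Omega^{-1,1}(\HH,\Gamma)$, produces a third-order expression in $\dot{g}$ whose coefficients involve $f',f'',f''',f''''$. The essential step is to set $F=\dot{g}/f'$, expand $\dot{g}=f'F$ and its derivatives, and invoke $q=\cS(f)$ together with $q'$ to collapse the coefficients: the $F_{zz}$ terms cancel, the $F_{z}$ coefficient reduces to $2q$, and the $F$ coefficient to $q'$, which yields $\Lambda_{q}(\dot{g}/f')=\dot{q}$ as in \eqref{dot-q-2}. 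This algebraic collapse is the one genuinely computational point, and keeping track of the Schwarzian identities $2q=2f'''/f'-3f''^{2}/f'^{2}$ and its derivative is where I expect the main bookkeeping to lie.

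Finally I would supply the Beltrami condition, the ingredient not yet explicit in the text. Since $g^{t}=f^{t}\circ w^{t\mu}$ with $f^{t}$ holomorphic on $\HH^{t\mu}$, the chain rule and the Beltrami equation $w^{t\mu}_{\z}=t\mu\,w^{t\mu}_{z}$ give $g^{t}_{\z}=(f^{t}_{z}\circ w^{t\mu})\,t\mu\,w^{t\mu}_{z}$. Differentiating at $t=0$, where $w^{0}=\id$ and $f^{0}=f$, only the explicit factor $t$ survives, so that $\dot{g}_{\z}=f'\mu$; dividing by the holomorphic $f'$ gives $\del_{\z}(\dot{g}/f')=\mu$. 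Assembling the cocycle formula \eqref{chi-3}, the equation \eqref{dot-q-2}, and this Beltrami identity gives the lemma.
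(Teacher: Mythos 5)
Your proposal is correct and follows the paper's route exactly: the cocycle identity and the equation $\Lambda_{q}(\dot{g}/f')=\dot{q}$ are obtained just as in Sections \ref{2.1}--\ref{2.2} (differentiating the equivariance relation and the Schwarzian identity \eqref{S-3} with Ahlfors' vanishing $\dot{w}^{\mu}_{zzz}=0$), and your Beltrami computation $g^{t}_{\z}=t\mu\,g^{t}_{z}\Rightarrow\dot{g}_{\z}=\mu f'$ is precisely the paper's proof of the one remaining claim. The only cosmetic slip is that the differentiated Schwarzian has coefficients involving only $f',f'',f'''$ (a fourth derivative appears only when expanding $\Lambda_{q}(\dot g/f')$ for the cancellation check), which does not affect the argument.
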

\begin{proof}
It remains only to check the last equation.
Since $g^{t}=f^{t}\circ w^{t\mu}$, it follows from the Beltrami equation for $w^{t\mu}$ that  on $\HH$ the function $g^{t}$ satisfies
$$g^{t}_{\z}=t\mu\, g^{t}_{z},$$
and therefore
$$\dot{g}_{\z}=\mu\,f',$$
i.e.
\begin{equation} \label{dot-g-3}
\frac{\del}{\del\z}\left(\frac{\dot{g}}{f'}\right)=\mu. \qedhere
\end{equation}
\end{proof}
\begin{remark} We have
$$\Lambda(\mu)=\dot{q}_{\z},$$
which is a compatibility condition of equations \eqref{dot-q-2} and \eqref{dot-g-3}. It can be also verified directly by differentiating
the equation
$$\left(\frac{\del}{\del\z}-t\mu\frac{\del}{\del z}-2t\mu_{z}\right)q(t)=0$$
at $t=0$,
$$\dot{q}_{\z}=2q\mu_{z}+q'\mu=\Lambda(\mu).$$
\end{remark}

\begin{corollary} \label{Cor 1} The function $\dfrac{\dot{g}}{f'}$ is given by the following formula
\begin{equation*}
\frac{\dot{g}(z)}{f'(z)}=\dot{w}(z)+\frac{1}{2}\int_{z_{0}}^{z}\frac{(f(z)-f(u))^{2}}{f'(z)f'(u)}\tilde{q}(u)du +\frac{P(f(z))}{f'(z)},
\end{equation*}
where $P\in\cP_{2}$ and $\tilde{q}=\dot{q}-\Lambda(\dot{w})=\dot{q}-2q\dot{w}_{z}-q'\dot{w}$.
\end{corollary}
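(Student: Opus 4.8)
The plan is to reduce the mixed system of Lemma \ref{differential} to the purely holomorphic situation covered by property $\bm{\Lambda 4}$, by subtracting off a vector field carrying the same $\delb$-data. Write $F=\dfrac{\dot g}{f'}$, so that Lemma \ref{differential} gives $\Lambda_q(F)=\dot q$ together with $F_{\z}=\mu$. Recall that $\dot w=\dot w^{\mu}$ is the $t$-derivative at $t=0$ of the normalized solution $w^{t\mu}$ of the Beltrami equation $w^{t\mu}_{\z}=t\mu\,w^{t\mu}_{z}$; since $w^{0}=\id$ one has $w^{0}_{z}=1$, and differentiation yields $\dot w_{\z}=\mu$. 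Consequently $F-\dot w$ is annihilated by $\del/\del\z$, hence is holomorphic on $\HH$.

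Next I would compute $\Lambda_q(F-\dot w)$. Because $\Lambda_q$ has holomorphic coefficient $q$ and $F-\dot w$ is holomorphic, the output is automatically holomorphic; explicitly,
$$\Lambda_q(F-\dot w)=\dot q-\Lambda_q(\dot w)=\dot q-\dot w_{zzz}-2q\,\dot w_{z}-q'\dot w.$$
Here I invoke the vanishing $\dot w_{zzz}=0$ for $\mu\in\Omega^{-1,1}(\HH,\Gamma)$, Ahlfors' formula already recalled in Sect. \ref{2.2}. The right-hand side then collapses to $\tilde q=\dot q-2q\,\dot w_{z}-q'\dot w$, exactly the form given in the statement, so that $\tilde q$ is a \emph{holomorphic} form of weight $4$ and $\Lambda_q(F-\dot w)=\tilde q$.

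Finally I would apply property $\bm{\Lambda 4}$ to the holomorphic equation $\Lambda_q(G)=\tilde q$ with $G=F-\dot w$, obtaining
$$F(z)-\dot w(z)=\frac{1}{2}\int_{z_{0}}^{z}\frac{(f(z)-f(u))^{2}}{f'(z)f'(u)}\tilde q(u)\,du+\frac{a f(z)^{2}+b f(z)+c}{f'(z)},$$
with $a,b,c$ \emph{a priori} anti-holomorphic in $z$. The only delicate point, and the step I expect to be the main obstacle, is upgrading these coefficients to constants: the left-hand side is holomorphic, and the integral term is holomorphic since $\tilde q$ and $f$ are, so $(af^{2}+bf+c)/f'$ is holomorphic; multiplying by the locally non-vanishing holomorphic $f'$ shows $af^{2}+bf+c$ is holomorphic, and differentiating twice in $z$ while using $f'\neq 0$ (local schlichtness) forces $a_{\z}=b_{\z}=c_{\z}=0$. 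Together with anti-holomorphy this gives $a,b,c\in\CC$, so $P(w)=aw^{2}+bw+c\in\cP_{2}$ and the stated formula follows. Everything apart from this rigidity argument and the clean use of $\dot w_{zzz}=0$ to identify the source term as $\tilde q$ is a direct substitution into $\bm{\Lambda 4}$.
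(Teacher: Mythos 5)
Your proposal is correct and follows essentially the same route as the paper: the paper's proof is precisely the observation that $\dfrac{\dot g}{f'}-\dot w$ is holomorphic and satisfies $\Lambda\bigl(\dfrac{\dot g}{f'}-\dot w\bigr)=\tilde q$, followed by an appeal to $\bm{\Lambda 1}$ and $\bm{\Lambda 4}$. The only cosmetic difference is that you derive the constancy of $a,b,c$ by hand from holomorphy of the remaining term, whereas the paper absorbs this step into the citation of $\bm{\Lambda 1}$ (the holomorphic kernel of $\Lambda_q$ being exactly $\{P\circ f/f':P\in\cP_{2}\}$).
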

\begin{proof} It follows from properties $\bm{\Lambda 1}$ and $\bm{\Lambda 4}$, since the holomorphic function $\dfrac{\dot{g}}{f'}-\dot{w}$ satisfies
$$\Lambda\left(\dfrac{\dot{g}}{f'}-\dot{w}\right)=\tilde{q}. \qedhere$$
\end{proof}
\begin{remark}
Similarly to Wolpert's formulas \cite{Wolpert} for Bers and Eichler-Shimura cocycles, from Corollary \ref{Cor 1} one can obtain an explicit formula for the parabolic cocycle $\chi\in Z^{1}_{\rm par}(\Gamma,\frak{g}_{\mathrm{Ad}\rho})$.
\end{remark}
\begin{corollary} \label{Cor 2} For every cusp $\alpha$ for $\Gamma$ there is $P_{\alpha}\in\cP_{2}$ such that
$$\frac{\dot{g}(z)}{f'(z)} = \frac{P_{\alpha}(f(z))}{f'(z)} + O(e^{-c_{\alpha} \im \sigma_{\alpha}z}) \quad\text{as}\quad  \im \sigma_{\alpha}z\rightarrow\infty,$$
where $\sigma_{\alpha}\in\mathrm{PSL}(2,\RR)$ is such that $\sigma_{\alpha}(\alpha)=\infty$ and $c_{\alpha}>0$.
\end{corollary}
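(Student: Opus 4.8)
The plan is to reduce to the cusp $\alpha=\infty$ and read off the asymptotics directly from the explicit formula in Corollary~\ref{Cor 1}. Conjugating $\Gamma$ by $\sigma_{\alpha}\in\mathrm{PSL}(2,\RR)$ we may assume $\sigma_{\alpha}=\id$, $\alpha=\infty$, and $\im\sigma_{\alpha}z=y$, the stabilizer $\Gamma_{\infty}$ being generated by a parabolic $\tau:z\mapsto z+1$; the objects $f,q,\mu,\dot q$ transform covariantly, so no generality is lost. By Corollary~\ref{Cor 1},
$$\frac{\dot g(z)}{f'(z)}=\dot w(z)+\frac12\int_{z_{0}}^{z}\frac{(f(z)-f(u))^{2}}{f'(z)f'(u)}\tilde q(u)\,du+\frac{P_{0}(f(z))}{f'(z)},$$
and I will show that each of the three summands equals $\frac{(\text{quadratic in }f)}{f'}$ up to an exponentially decaying term; summing the three quadratics produces the required $P_{\alpha}\in\cP_{2}$.

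First I record the behaviour of $f$ at the cusp. The monodromy $\rho(\tau)$ is parabolic (Sect.~\ref{1.5}); after post-composing $f$ by a constant M\"{o}bius map, which leaves $\dot g/f'$ unchanged, we may assume $\rho(\tau):w\mapsto w+b$ with $b\neq0$. Then $f(z+1)=f(z)+b$, so $f(z)-bz$ is a function of $\zeta=e^{2\pi\sqrt{-1}z}$ which, by the Frobenius analysis at the regular singular point $\zeta=0$, extends holomorphically across $\zeta=0$. Hence
$$f(z)=bz+c_{0}+O(e^{-2\pi y}),\quad f'(z)=b+O(e^{-2\pi y}),\quad \frac{1}{f'(z)}=\frac1b+O(e^{-2\pi y})$$
as $y\to\infty$. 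In particular $1/f'$ is bounded, $f(z)=O(y)$, and a direct computation shows that every quadratic polynomial in $z$ equals $\frac{R(f(z))}{f'(z)}$ for a suitable $R\in\cP_{2}$, up to an exponentially small error.

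Next I treat the two remaining summands. For $\dot w$: since $\mu=y^{2}\overline{q}$ is a harmonic Beltrami differential, $\partial_{z}^{2}\mu$ is anti-holomorphic while $\dot w_{zzz}=0$, and $\mu=O(e^{-2\pi y})$ because the cusp form decays; integrating these relations shows $\dot w(z)$ is quadratic in $z$ up to $O(y^{2}e^{-2\pi y})$, hence $\dot w=\frac{R_{1}(f)}{f'}+O(e^{-cy})$ for some $R_{1}\in\cP_{2}$. For the integral, Remark~\ref{q-dot-0} gives $\dot q=O(e^{-\pi y})$, and since $q,q'=O(e^{-2\pi y})$ while $\dot w,\dot w_{z}$ grow at most polynomially, $\tilde q=\dot q-2q\dot w_{z}-q'\dot w=O(e^{-\pi y})$. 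Expanding $(f(z)-f(u))^{2}=f(z)^{2}-2f(z)f(u)+f(u)^{2}$ splits the integral into $\frac{f(z)^{2}}{2f'(z)}I_{0}(z)-\frac{f(z)}{f'(z)}I_{1}(z)+\frac{1}{2f'(z)}I_{2}(z)$, where $I_{k}(z)=\int_{z_{0}}^{z}f(u)^{k}\tilde q(u)/f'(u)\,du$. Because the integrands are $O(|u|^{k}e^{-\pi\im u})$, the integrals to the cusp $I_{k}:=\int_{z_{0}}^{\infty}$ converge, and writing $I_{k}(z)=I_{k}-\int_{z}^{\infty}$ the tails, multiplied by the prefactors $f(z)^{2}/f'(z)=O(y^{2})$, etc., are $O(y^{2}e^{-\pi y})$. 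Thus the integral equals $\frac{\hat P(f(z))}{f'(z)}+O(y^{2}e^{-\pi y})$ with $\hat P(w)=\tfrac12(I_{0}w^{2}-2I_{1}w+I_{2})\in\cP_{2}$.

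Collecting the three pieces gives $\frac{\dot g}{f'}=\frac{P_{\alpha}(f)}{f'}+O(y^{2}e^{-\pi y})$ with $P_{\alpha}=R_{1}+\hat P+P_{0}\in\cP_{2}$, and since $y^{2}e^{-\pi y}=O(e^{-c_{\alpha}y})$ for any $0<c_{\alpha}<\pi$ this is the assertion. I expect the main obstacle to be the integral step: one must justify convergence of $I_{k}$ up to the cusp and the exponential smallness of the tails despite the polynomially growing factors $f(z)^{k}/f'(z)$, and --- since $\tilde q$ is not holomorphic --- fix the path to $\infty$ (say the vertical ray in the cusp neighbourhood) so that the decomposition $I_{k}(z)=I_{k}-\int_{z}^{\infty}$ is meaningful. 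The control of $\dot w$ near the cusp, resting on $\dot w_{zzz}=0$ together with the decay of $\mu$, is the other point requiring care.
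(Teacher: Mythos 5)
Your proposal is correct and follows the route the paper itself indicates: its proof of Corollary \ref{Cor 2} is the one-line remark that the statement follows from Remark \ref{q-dot-0} and Lemma \ref{differential} ``or from Corollary \ref{Cor 1}'', and you have simply carried out the Corollary \ref{Cor 1} route in detail (asymptotics of $f$ at the cusp via the Frobenius analysis, of $\dot{w}$ via $\dot{w}_{zzz}=0$ and the decay of $\mu$, and of the variation-of-parameters integral). One small simplification: $\tilde{q}=\Lambda_{q}\left(\frac{\dot{g}}{f'}-\dot{w}\right)$ is in fact holomorphic, being $\Lambda_{q}$ applied to the holomorphic function $\frac{\dot{g}}{f'}-\dot{w}$, so the path-dependence issue you flag in the last paragraph does not actually arise.
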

\begin{proof}
It follows from Remark \ref{q-dot-0} and Lemma \ref{differential} (or from Corollary \ref{Cor 1}).
\end{proof}
\begin{remark} \label{Fuchs} For the family $q^{t}=\cS(h_{t\mu})$, introduced in Remark \ref{Embed}, we have $g^{t}=w_{t\mu}$ and $\dot{q}=\dot{g}_{zzz}$. It follows from classic Ahlfors' formula in \cite{Ahlfors} that
$$\dot{q}=-\frac{1}{2}q,\quad\text{where}\quad \mu=y^{2}\bar{q}.$$
Thus 
$$\left.d\imath\right|_{0}(\mu)=(-\tfrac{1}{2}q,\mu)\in T_{0}\cM,$$
and it follows from \eqref{WP-metric} that
$$\imath^{\ast}(\omega)=\sqrt{-1}\,\omega_{\mathrm{WP}}.$$
\end{remark}

\section{Computation of the symplectic form}\label{3}

\subsection{The fundamental domain} \label{3.1} Here we recall the definition of a canonical fundamental domain for the Fuchsian group $\Gamma$ (see \cite{Hejhal} and references therein).
\subsubsection{}\label{3.1.1} In case $m=n=0$ choose $z_{0}\in\HH$ and standard generators $a_{k},b_{k}$, $k=1,\dots,g$.
The oriented canonical fundamental domain $F$ with the base point $z_{0}$ is a topological $4g$-gon whose ordered vertices are given by the consecutive quadruples
$$(R_{k}z_{0},R_{k}a_{k+1}z_{0}, R_{k}a_{k+1}b_{k+1}z_{0}, R_{k}a_{k+1}b_{k+1}a^{-1}_{k+1}z_{0}), \quad k=0,\dots, g-1.$$
Corresponding $A$ and $B$ edges of $F$ are analytic arcs $A_{k}=(R_{k-1}z_{0}, R_{k-1}a_{k}z_{0})$ and $B_{k}=(R_{k}z_{0},R_{k}b_{k}z_{0})$, $k=1,\dots,g$,
and corresponding dual edges are $A_{k}'= (R_{k}b_{k}z_{0},R_{k}b_{k}a_{k}z_{0})$ and $B_{k}' =(R_{k-1}a_{k}z_{0},R_{k}b_{k}a_{k}z_{0})$ (see Fig. 1 for a typical fundamental domain for a
group $\Gamma$ of genus $2$).

\vspace{5mm}

\begin{tikzpicture}
\tikzstyle{every node}=[font=\small]
\draw (-6,0) -- (6,0) coordinate[];
\fill[gray!10]   (2,2) to (3,4) to (2,5) to (0,5) to (-2,4) to (-3,3) to (-2,2) to (0,1);

\begin{scope}[thick, decoration={markings, mark=at position 0.5 with {\arrow{>}}}]         
\draw[fill=white!300, line width=0.8pt, postaction ={decorate}]
(2,2) to [bend left] (3,4);
\draw[fill=white!300,,line width=0.8pt, postaction ={decorate}] 
(3,4) to [bend left] (2,5);
\draw[fill=white!300,,line width=0.8pt, postaction ={decorate}] 
(0,5) to [bend right] (2,5);
\draw[fill=white!300,,line width=0.8pt, postaction ={decorate}] 
(-2,4) to [bend right] (0,5);
\draw[fill=white!300,,line width=0.8pt,postaction ={decorate}] 
(-2,4) to [bend left] (-3,3);
\draw[fill=white!300, line width=0.8pt,postaction ={decorate}] 
(2,2) to [bend right] (0,1);
\draw[fill=white!300, line width=0.8pt,postaction ={decorate}] 
(0,1) to [bend right] (-2,2);
\draw[fill=white!300,line width=0.8pt, postaction ={decorate}] 
(-3,3) to [bend left] (-2,2);
  \end{scope}
\node[right] at (2,1.95) {$z_{0}$};
\node[below] at (1,1.6) {$B_{2}$};
\node[right] at (3,4) {$a_{1}z_{0}$};
\node[below] at (2.6,3.4) {$A_{1}$};
\node at (0,3) {$\Huge{F}$};
\node[above] at (2.2,5) {$a_{1}b_{1}z_{0}$};
\node[above] at (2.5,4.35) {$B'_{1}$};
\node[above] at (-.2,5) {$a_{1}b_{1}a_{1}^{-1}z_{0}$};
\node[above] at (1,4.7) {$A'_{1}$};
\node[above] at (-2,4) {$R_{1}z_{0}$};
\node[above] at (-1,4.2) {$B_{1}$};
\node[left] at (-3,3) {$R_{1}a_{2}z_{0}$};
\node[left] at (-2.25, 3.6) {$A_{2}$};
\node[below] at (-2,2) {$R_{1}a_{2}b_{2}z_{0}$};
\node[below] at (-2.6,2.7) {$B'_{2}$};
\node[below] at (-1,1.6) {$A'_{2}$};
\node[below] at (0,1) {$R_{1}a_{2}b_{2}a_{2}^{-1}z_{0}$};
\node at (0,-1) {Figure 1};
\filldraw [black] (-3,3) circle [radius=1pt];
\filldraw [black] (0,1) circle [radius=1pt];
\filldraw [black] (2,2) circle [radius=1pt];
\filldraw [black] (3,4) circle [radius=1pt];
\filldraw [black] (2,5) circle [radius=1pt];
\filldraw [black] (0,5) circle [radius=1pt];
\filldraw [black] (-2,4) circle [radius=1pt];
\filldraw [black] (-2,2) circle [radius=1pt];
\end{tikzpicture}

\noindent
We have
$$\del F=\sum_{k=1}^{g}(A_{k}-B_{k}-A_{k}'+B_{k}').$$
Here 
$$A_{k}=\alpha_{k}(A_{k}')\quad\text{and}\quad B_{k}=\beta_{k}(B_{k}'),$$ 
where $\alpha_{k}=R_{k-1}b^{-1}_{k}R_{k}^{-1}$ and $\beta_{k}=R_{k}a_{k}^{-1}R_{k-1}^{-1}$. They satisfy
$$[\alpha_{k},\beta_{k}]=R_{k-1}R_{k}^{-1},$$
so that
$$\mathcal{R}_{k}=\prod_{i=1}^{k}[\alpha_{i},\beta_{i}]=R_{k}^{-1}\quad\text{and}\quad \prod_{k=1}^{g}\alpha_{k}\beta_{k}\alpha_{k}^{-1}\beta_{k}^{-1}=1.$$

The generators $\alpha_{k}, \beta_{k}$, $k=1,\dots,g$, are dual generators of $\Gamma$, introduced by A. Weil \cite{Weil} (see also \cite{Guruprasad}),
and
\begin{equation*}
a_{k}^{-1}=\R_{k}\beta_{k}\R_{k-1}^{-1},\quad b_{k}^{-1}=\R_{k-1}\alpha_{k}\R^{-1}_{k}.
\end{equation*}
We have $A_{k}=(\R_{k-1}^{-1}z_{0},\beta_{k}^{-1}\R_{k}^{-1}z_{0})$, $B_{k}=(\R_{k}^{-1}z_{0},\alpha_{k}^{-1}\R_{k-1}^{-1}z_{0})$ and
\begin{equation*} 
\del F=\sum_{i=1}^{2g}(S_{l}-\lambda_{i}(S_{i})),
\end{equation*}
where $S_{k}=A_{k}$, $S_{k+g}=-B_{k}$ and  $\lambda_{k}=\alpha^{-1}_{k}$, $\lambda_{k+g}=\beta^{-1}_{k}$, $k=1,\dots,g$.
\begin{remark}\label{Order}
The ordering of vertices of $F$ for the dual generators corresponds to the opposite orientation, so that (cf. \eqref{2-cycle})
\begin{equation*}
c=-\sum_{k=1}^{g}\left\{\left(\frac{\del \R}{\del \alpha_{k}},\alpha_{k}\right)+\left(\frac{\del \R}{\del\beta_{k}}, \beta_{k}\right)\right\}.
\end{equation*}
\end{remark}

\subsubsection{} \label{3.1.2}
In general case $m+n>0$, oriented canonical fundamental domain $F$ with the base point $z_{0}$ is a $(4g+2m+2n)$-gon whose ordered vertices are given by the consecutive quadruples
$$(R_{k}z_{0},R_{k}a_{k+1}z_{0}, R_{k}a_{k+1}b_{k+1}z_{0}, R_{k}a_{k+1}b_{k+1}a^{-1}_{k+1}z_{0}), \quad k=0,\dots, g-1,$$
followed by the consecutive triples $(R_{g+i-1}z_{0},z_{i},R_{g+i}z_{0})$, $i=1,\dots,m+n$. Here $z_{i}\in\HH$, $i=1,\dots,m$, are fixed points of the elliptic elements 
$$\gamma_{i}=R_{g+i-1}c^{-1}_{i}R_{g+i-1}^{-1},$$ 
and
$z_{m+j}\in\RR$, $j=1,\dots,n$, are fixed points of the parabolic elements 
$$\gamma_{m+j}=R_{g+m+j-1}c^{-1}_{m+j}R_{g+m+j-1}^{-1}$$
(see Fig. 2 for a typical fundamental domain of group $\Gamma$ of signature $(1;1,6)$, where $z_{1}$ is elliptic fixed point of order $6$ and $z_{2}$ is a cusp).

\vspace{5mm}

\begin{tikzpicture}
\tikzstyle{every node}=[font=\small]
\draw (-6,0) -- (6,0) coordinate[];
\fill[gray!10]  (2,2) to (3,4) to (2,5) to (0,5) to (-2,4) to (-3,3) to (-2,2) to (0,0);

\begin{scope}[thick, decoration={markings, mark=at position 0.5 with {\arrow{>}}}]         
\draw[fill=white!300, line width=0.8pt, postaction ={decorate}]
(2,2) to [bend left] (3,4);
\draw[fill=white!300,,line width=0.8pt, postaction ={decorate}] 
 (3,4) to [bend left] (2,5);
\draw[fill=white!300,,line width=0.8pt, postaction ={decorate}] 
(0,5) to [bend right] (2,5);
\draw[fill=white!300,,line width=0.8pt, postaction ={decorate}] 
(-2,4) to [bend right] (0,5);
\draw[fill=white!300,,line width=0.8pt,postaction ={decorate}] 
(-2,4) to [bend left] (-3,3);
\draw[fill=white!300, line width=0.8pt,postaction ={decorate}] 
(2,2) to [bend right] (0,0);
\draw[fill=white!300, line width=0.8pt,postaction ={decorate}] 
(-2,2) to [bend left] (0,0);
\draw[fill=white!300,line width=0.8pt, postaction ={decorate}] 
(-2,2) to [bend right] (-3,3);
\end{scope}
\node[below] at (0,-.1) {$z_{2}$};
\node[right] at (2,1.95) {$z_{0}$};
\node[below] at (1.1,1.5) {$C'_{2}$};
\node[right] at (3,4) {$a_{1}z_{0}$};
\node[below] at (2.6,3.4) {$A_{1}$};
\node[above] at (2.2,5) {$a_{1}b_{1}z_{0}$};
\node[above] at (2.5,4.35) {$B'_{1}$};
\node[above] at (-.2,5) {$R_{1}b_{1}z_{0}$};
\node[above] at (1,4.7) {$A'_{1}$};
\node[above] at (-2,4) {$R_{1}z_{0}$};
\node[above] at (-1,4.2) {$B_{1}$};
\node[left] at (-3,3) {$z_{1}$};
\node at (0,3) {$F$};
\node[left] at (-2.25, 3.6) {$C_{1}$};
\node[below] at (-2,2) {$R_{1}c_{1}z_{0}$};
\node[below] at (-2.6,2.7) {$C'_{1}$};
\node[below] at (-1.1,1.5) {$C_{2}$};
\node at (0,-1) {Figure 2};
\filldraw [black] (-3,3) circle [radius=1pt];
\filldraw [black] (0,0) circle [radius=1pt];
\filldraw [black] (2,2) circle [radius=1pt];
\filldraw [black] (3,4) circle [radius=1pt];
\filldraw [black] (2,5) circle [radius=1pt];
\filldraw [black] (0,5) circle [radius=1pt];
\filldraw [black] (-2,4) circle [radius=1pt];
\filldraw [black] (-2,2) circle [radius=1pt];
\end{tikzpicture}

\noindent
We have
$$\del F=\sum_{k=1}^{g}(A_{k}-B_{k}-A_{k}'+B_{k}') +\sum_{i=1}^{m+n}(C_{i}-C'_{i}) ,$$ 
where 
$$C_{i}=(R_{g+i-1}z_{0},z_{i}),\quad C'_{i}=(R_{g+i}z_{0},z_{i}),\quad C_{i}=\gamma_{i}(C_{i}'),\quad i=1,\dots, m+n.$$ 

The generators $\alpha_{k}, \beta_{k}$, $k=1,\dots,g$, and $\gamma_{i}$, $i=1,\dots, m+n$, are dual generators of $\Gamma$ satisfying
\begin{equation*} 
\R_{g}\gamma_{1}\cdots\gamma_{m+n}=1.
\end{equation*}
We have $C_{i}=(\R_{g+i-1}^{-1}z_{0},z_{i})$ and
\begin{equation}\label{Domain}
\del F=\sum_{k=1}^{N}(S_{k}-\lambda_{k}(S_{k})),\quad N=2g+m+n,
\end{equation}
where $S_{2g+i}=C_{i}$, $\lambda_{2g+i}=\gamma^{-1}_{i}$, $i=1,\dots,m+n$.

\subsection{The main formula} Here we obtain another representation for the symplectic form $\omega$. Put
$F^{Y}=\{z\in F : \im(\sigma_{j}^{-1})\leq Y, j=1,\dots,n\}$, where $\sigma_{j}^{-1}(x_{j})=\infty$, and denote by $H_{j}(Y)$ corresponding horocycles in $F$. We have 
$$\omega((\dot\theta_{1},\mu_{1}),(\dot\theta_{2},\mu_{2}))=\frac{\sqrt{-1}}{2}\lim_{Y\rightarrow\infty}\int_{F^{Y}}(\dot{q}_{1}\mu_{2}-\dot{q}_{2}\mu_{1})dz\wedge d\z.$$

\begin{lemma} \label{Omega} The symplectic  form $\omega$, evaluated on two tangent vectors $(\dot\theta_{1},\mu_{1})$ and $(\dot\theta_{2},\mu_{2})$ corresponding to the curves $\theta_{1}(t)$ and $\theta_{2}(t)$,  is given by
\begin{gather*}
\omega((\dot\theta_{1},\mu_{1}),(\dot\theta_{2},\mu_{2})) =\\
=\frac{\sqrt{-1}}{4}
\int_{\del F}\left\{\left(\dot{q}_{2}\frac{\dot{g}_{1}}{f'}-\dot{q}_{1}\frac{\dot{g}_{2}}{f'}\right)dz +\left(B_{q}\!\left[\mu_{2},\frac{\dot{g}_{1}}{f'}\right]-B_{q}\!\left[\mu_{1},\frac{\dot{g}_{2}}{f'}\right]\right)d\z\right\}.
\end{gather*}
\end{lemma}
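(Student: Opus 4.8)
The plan is to exhibit the area integrand of the formula preceding the statement as an exact $2$-form and then apply Stokes' theorem, the truncation $F^{Y}$ being present precisely to legitimize this at the cusps. Abbreviate $F_{i}=\dot g_{i}/f'$. By Lemma \ref{differential} we have $\Lambda_{q}(F_{i})=\dot q_{i}$ and $(F_{i})_{\z}=\mu_{i}$, while the Remark following Lemma \ref{differential} records the compatibility relation $\Lambda_{q}(\mu_{i})=(\dot q_{i})_{\z}$. The idea is to introduce on $F$ the smooth $1$-form
\begin{equation*}
\alpha=\left(\dot q_{2} F_{1}-\dot q_{1} F_{2}\right)dz+\left(B_{q}[\mu_{2},F_{1}]-B_{q}[\mu_{1},F_{2}]\right)d\z,
\end{equation*}
whose boundary integral $\tfrac{\sqrt{-1}}{4}\int_{\del F}\alpha$ is exactly the right-hand side of the Lemma.

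First I would compute $d\alpha$. Writing $\alpha=P\,dz+Q\,d\z$ gives $d\alpha=(Q_{z}-P_{\z})\,dz\wedge d\z$. Differentiating $P$ and using $(F_{i})_{\z}=\mu_{i}$ together with $\Lambda_{q}(\mu_{i})=(\dot q_{i})_{\z}$,
\begin{equation*}
P_{\z}=\Lambda_{q}(\mu_{2})F_{1}+\dot q_{2}\mu_{1}-\Lambda_{q}(\mu_{1})F_{2}-\dot q_{1}\mu_{2},
\end{equation*}
while property $\bm{\Lambda 5}$, in the symmetric form $(B_{q}[\mu,F])_{z}=\Lambda_{q}(\mu)F+\mu\,\Lambda_{q}(F)$, combined with $\Lambda_{q}(F_{i})=\dot q_{i}$, yields
\begin{equation*}
Q_{z}=\Lambda_{q}(\mu_{2})F_{1}+\mu_{2}\dot q_{1}-\Lambda_{q}(\mu_{1})F_{2}-\mu_{1}\dot q_{2}.
\end{equation*}
The two $\Lambda_{q}(\mu_{j})F_{i}$ terms cancel in the difference, leaving $Q_{z}-P_{\z}=2(\dot q_{1}\mu_{2}-\dot q_{2}\mu_{1})$, so that $d\alpha=2(\dot q_{1}\mu_{2}-\dot q_{2}\mu_{1})\,dz\wedge d\z$ is exactly twice the integrand of the area formula.

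Consequently, applying Stokes' theorem on $F^{Y}$,
\begin{equation*}
\frac{\sqrt{-1}}{2}\int_{F^{Y}}(\dot q_{1}\mu_{2}-\dot q_{2}\mu_{1})\,dz\wedge d\z=\frac{\sqrt{-1}}{4}\int_{\del F^{Y}}\alpha,
\end{equation*}
and $\del F^{Y}$ consists of the sides of $\del F$ truncated at height $Y$ together with the horocyclic arcs $H_{j}(Y)$. Letting $Y\to\infty$ recovers $\int_{\del F}\alpha$ from the truncated sides, so the proof reduces to showing $\int_{H_{j}(Y)}\alpha\to 0$.

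This last estimate is the only real obstacle. After conjugating a cusp to $\infty$, Remark \ref{q-dot-0} gives $\dot q_{i}=O(e^{-\pi y})$, and the harmonic Beltrami differentials $\mu_{i}=y^{2}\overline{q_{0,i}}$ (together with all their $z$-derivatives entering $B_{q}[F,G]=F_{zz}G+FG_{zz}-F_{z}G_{z}+2qFG$) decay like $y^{2}e^{-2\pi y}$, whereas by Corollary \ref{Cor 2} the factors $F_{i}=\dot g_{i}/f'$ grow at most polynomially. Hence every term of $\alpha$ decays exponentially along $H_{j}(Y)$, which has bounded Euclidean length; thus $\int_{H_{j}(Y)}\alpha\to 0$ and passing to the limit gives the asserted identity.
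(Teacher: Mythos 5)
Your proof is correct and takes essentially the same route as the paper: both exhibit the boundary $1$-form's exterior derivative as $2(\dot q_{1}\mu_{2}-\dot q_{2}\mu_{1})\,dz\wedge d\bar z$ using Lemma \ref{differential}, the compatibility relation $\Lambda_{q}(\mu)=\dot q_{\bar z}$ and property $\bm{\Lambda 5}$, and then apply Stokes' theorem on $F^{Y}$ with the horocycle contributions vanishing as $Y\to\infty$. Your cusp estimate is spelled out in slightly more detail than the paper's one-line appeal to exponential decay, but the argument is the same.
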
 
\begin{proof} Denote the $1$-form under the integral by $\vartheta$. We have, using Lemma \ref{differential},
\begin{gather*}
d\vartheta=\left(\dot{q}_{2\,\z}\frac{\dot{g}_{1}}{f'} + \dot{q}_{2}\left(\frac{\dot{g}_{1}}{f'}\right)_{\z} - \dot{q}_{1\,\z}\frac{\dot{g}_{2}}{f'} - \dot{q}_{1}\left(\frac{\dot{g}_{2}}{f'}\right)_{\z}\right)d\z\wedge dz\\
+\left(\Lambda_{q}(\mu_{2})\frac{\dot{g}_{1}}{f'}+\mu_{2}\Lambda_{q}\left(\frac{\dot{g}_{1}}{f'}\right)- \Lambda_{q}(\mu_{1})\frac{\dot{g}_{2}}{f'}-\mu_{1}\Lambda_{q}\left(\frac{\dot{g}_{2}}{f'}\right)\right)dz\wedge d\z\\
=\left(\dot{q}_{2\,\z}\frac{\dot{g}_{1}}{f'} + \dot{q}_{2}\mu_{1} - \dot{q}_{1\,\z}\frac{\dot{g}_{2}}{f'} - \dot{q}_{1}\mu_{2}\right)d\z\wedge dz +\\
+\left(\dot{q}_{2\,\z}\frac{\dot{g}_{1}}{f'}+\mu_{2}\dot{q}_{1}- \dot{q}_{1\,\z}\frac{\dot{g}_{2}}{f'}-\mu_{1}\dot{q}_{2}\right)dz\wedge d\z\\
=2(\dot{q}_{1}\mu_{2}-\dot{q}_{2}\mu_{1})dz\wedge d\z.
\end{gather*}
Since due to exponential decay of $\dot{q}_{1}, \dot{q}_{2}$ and $\mu_{1}, \mu_{2}$  at the cusps the integrals over horocycles $H_{j}(Y)$ tend to $0$ as $Y\rightarrow\infty$, by Stokes' theorem we get \eqref{omega-1}.
\end{proof}

The line integral over $\del F$ in Lemma \ref{Omega} can be evaluated explicitly.
\begin{proposition} \label{Sum} We have
\begin{gather*}
\omega((\dot\theta_{1},\mu_{1}),(\dot\theta_{2},\mu_{2})) =\\
\frac{\sqrt{-1}}{4}\sum_{i=1}^{N}\left.\left(B_{q}\!\left[\frac{\dot{g}_{2}}{f'},\frac{1}{f'}\chi_{1}(\lambda_{i}^{-1})\circ f\right]-B_{q}\!\left[\frac{\dot{g}_{1}}{f'},\frac{1}{f'}\chi_{2}(\lambda_{i}^{-1})\circ f\right]\right)\right|^{\del S_{i}(1)}_{\del S_{i}(0)}.
\end{gather*}
\end{proposition}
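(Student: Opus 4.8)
The plan is to evaluate the line integral of Lemma~\ref{Omega} side by side, using the pairing of sides of the canonical fundamental domain recorded in \eqref{Domain}, namely $\del F=\sum_{i=1}^{N}(S_{i}-\lambda_{i}(S_{i}))$. Write $\vartheta$ for the $1$-form under the integral in Lemma~\ref{Omega}. Parametrising $\lambda_{i}(S_{i})$ by $\lambda_{i}$ composed with a parametrisation of $S_{i}$, so that $\int_{\lambda_{i}(S_{i})}\vartheta=\int_{S_{i}}\lambda_{i}^{\ast}\vartheta$, I rewrite
$$\int_{\del F}\vartheta=\sum_{i=1}^{N}\int_{S_{i}}\left(\vartheta-\lambda_{i}^{\ast}\vartheta\right).$$
The whole strategy is to show that on each side $S_{i}$ the $1$-form $\vartheta-\lambda_{i}^{\ast}\vartheta$ is \emph{exact}, equal to $d\Xi_{i}$ for the explicit function
$$\Xi_{i}=B_{q}\!\left[\frac{\dot{g}_{2}}{f'},\frac{\chi_{1}(\lambda_{i}^{-1})\circ f}{f'}\right]-B_{q}\!\left[\frac{\dot{g}_{1}}{f'},\frac{\chi_{2}(\lambda_{i}^{-1})\circ f}{f'}\right];$$
the fundamental theorem of calculus then turns each summand into the endpoint difference $\Xi_{i}\big|^{\del S_{i}(1)}_{\del S_{i}(0)}$, which is precisely the claimed formula after multiplying by $\sqrt{-1}/4$. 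Throughout I abbreviate $h_{j}=\dot{g}_{j}/f'$ and $\psi_{j}=\chi_{j}(\lambda_{i}^{-1})\circ f/f'$, and recall from Lemma~\ref{differential} that $\Lambda_{q}(h_{j})=\dot{q}_{j}$, $\del_{\z}h_{j}=\mu_{j}$, and $\psi_{j}=h_{j}-(h_{j}\circ\lambda_{i})/\lambda_{i}'$.

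To compute $\vartheta-\lambda_{i}^{\ast}\vartheta$ I treat the $dz$- and $d\z$-parts separately, using $\lambda_{i}^{\ast}(dz)=\lambda_{i}'\,dz$ and $\lambda_{i}^{\ast}(d\z)=\overline{\lambda_{i}'}\,d\z$. In the $dz$-part the coefficient of $\vartheta$ is $\dot{q}_{2}h_{1}-\dot{q}_{1}h_{2}$; since each $\dot{q}_{j}$ is a form of weight $4$ for $\Gamma$, so that $\dot{q}_{j}\circ\lambda_{i}\,(\lambda_{i}')^{2}=\dot{q}_{j}$, pulling back cancels two powers of $\lambda_{i}'$, and inserting the cocycle relation $(h_{j}\circ\lambda_{i})/\lambda_{i}'=h_{j}-\psi_{j}$ leaves the $dz$-coefficient of $\vartheta-\lambda_{i}^{\ast}\vartheta$ equal to $\dot{q}_{2}\psi_{1}-\dot{q}_{1}\psi_{2}$. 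In the $d\z$-part the coefficient is $B_{q}[\mu_{2},h_{1}]-B_{q}[\mu_{1},h_{2}]$; since each $\mu_{j}$ is a Beltrami differential for $\Gamma$ it satisfies the hypothesis $(\mu_{j}\circ\lambda_{i})\overline{\lambda_{i}'}/\lambda_{i}'=\mu_{j}$ of property \textbf{B3}, and applying \textbf{B3} with $H=h_{k}-(h_{k}\circ\lambda_{i})/\lambda_{i}'=\psi_{k}$ leaves the $d\z$-coefficient equal to $B_{q}[\mu_{2},\psi_{1}]-B_{q}[\mu_{1},\psi_{2}]$.

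It then remains to recognise these two coefficients as $\del_{z}\Xi_{i}$ and $\del_{\z}\Xi_{i}$. For the holomorphic part, $\psi_{j}=P\circ f/f'$ with $P=\chi_{j}(\lambda_{i}^{-1})\in\cP_{2}$ is holomorphic and, by property $\bm{\Lambda 1}$, satisfies $\Lambda_{q}(\psi_{j})=0$; hence property $\bm{\Lambda 5}$ gives $\del_{z}B_{q}[h_{2},\psi_{1}]=\Lambda_{q}(h_{2})\psi_{1}+h_{2}\Lambda_{q}(\psi_{1})=\dot{q}_{2}\psi_{1}$, so that $\del_{z}\Xi_{i}=\dot{q}_{2}\psi_{1}-\dot{q}_{1}\psi_{2}$. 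For the anti-holomorphic part, differentiating $B_{q}[h_{2},\psi_{1}]=h_{2,zz}\psi_{1}+h_{2}\psi_{1,zz}-h_{2,z}\psi_{1,z}+2q\,h_{2}\psi_{1}$ in $\z$ and using that $\psi_{1}$ and $q$ are holomorphic while $\del_{\z}h_{2}=\mu_{2}$ yields $\del_{\z}B_{q}[h_{2},\psi_{1}]=B_{q}[\mu_{2},\psi_{1}]$, so that $\del_{\z}\Xi_{i}=B_{q}[\mu_{2},\psi_{1}]-B_{q}[\mu_{1},\psi_{2}]$. Thus $\vartheta-\lambda_{i}^{\ast}\vartheta=d\Xi_{i}$, and summing over $i$ gives the Proposition.

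The main obstacle is the treatment of the sides $S_{2g+i}=C_{i}$ ending at the elliptic and parabolic fixed points $z_{i}$. At an elliptic fixed point $z_{i}\in\HH$ the developing map $f$ is holomorphic and $\Xi_{i}$ is finite, so the endpoint evaluation is immediate. At a cusp $z_{i}\in\RR$, however, the identities above must be applied on the truncated domain $F^{Y}$ and the limit $Y\to\infty$ taken, exactly as in Lemma~\ref{Omega}. Here I expect the decay estimates of Remark~\ref{q-dot-0} and Corollary~\ref{Cor 2} to ensure simultaneously that the horocycle contributions vanish in the limit and that $\Xi_{i}$ has a finite boundary value at the cusp: by property \textbf{B1} the leading term of $\Xi_{i}$ near the cusp is $B_{0}$ of two quadratic polynomials composed with $f$, which is constant in $z$, while the remainder is $O(e^{-c_{\alpha}\im\sigma_{\alpha}z})$. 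Verifying that this limit exists and that no boundary contribution is lost at the cusps is the delicate point; once the transformation laws are in hand, the manipulations with $\bm{\Lambda 5}$ and \textbf{B3} are routine.
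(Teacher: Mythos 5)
Your proof is correct and follows essentially the same route as the paper: pairing the sides of $\del F$ via \eqref{Domain}, reducing $\vartheta-\lambda_{i}^{\ast}\vartheta$ with property \textbf{B3} and the cocycle relation of Lemma~\ref{differential}, recognizing the result as the exact differential of the stated $B_{q}$-expression via $\bm{\Lambda 1}$ and $\bm{\Lambda 5}$, and invoking Corollary~\ref{Cor 2} for the boundary values at the cusps. The only difference is cosmetic (you phrase the reduction as computing $\vartheta-\lambda_{i}^{\ast}\vartheta$ directly rather than subtracting the two integrals), and your explicit justification of the finite limit at the cusps via \textbf{B1} is a slight elaboration of what the paper leaves to Corollary~\ref{Cor 2}.
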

\begin{proof}
Using Lemma \ref{Omega}, formula \eqref{Domain},  Lemma \ref{differential} and property $\bf{B3}$, we get
\begin{gather*}
\frac{4}{\sqrt{-1}}\omega((\dot\theta_{1},\mu_{1}),(\dot\theta_{2},\mu_{2}))\\ 
=\sum_{i=1}^{N}\left(\int_{S_{i}}-\int_{\lambda_{i}(S_{i})}\right)\left\{\left(\dot{q}_{2}\frac{\dot{g}_{1}}{f'}-\dot{q}_{1}\frac{\dot{g}_{2}}{f'}\right)dz +\left(B_{q}\!\left[\mu_{2},\frac{\dot{g}_{1}}{f'}\right]-B_{q}\!\left[\mu_{1},\frac{\dot{g}_{2}}{f'}\right]\right)d\z\right\}\\
=\sum_{i=1}^{N}\int_{S_{i}}\left\{\left(\dot{q}_{2}\,\frac{1}{f'}\chi_{1}(\lambda_{i}^{-1})\circ f-\dot{q}_{1}\,\frac{1}{f'}\chi_{2}(\lambda_{i}^{-1})\circ f\right)dz \;+ \right.\\
+\left.\left(B_{q}\!\left[\mu_{2},\frac{1}{f'}\chi_{1}(\lambda_{i}^{-1})\circ f\right]-B_{q}\!\left[\mu_{1},\frac{1}{f'}\chi_{2}(\lambda_{i}^{-1})\circ f\right]\right)d\z\right\}.\\
\end{gather*}
Using Lemma \ref{differential} and properties $\bm{\Lambda1}$ and $\bm{\Lambda5}$, we obtain
$$B_{q}\!\left[\mu,\frac{1}{f'}\chi(\lambda_{i}^{-1})\circ f\right]=\frac{\del}{\del\z}B_{q}\!\left[\frac{\dot{g}}{f'},\frac{1}{f'}\chi(\lambda_{i}^{-1})\circ f\right]$$
and 
$$\frac{\del}{\del z}B_{q}\!\left[\frac{\dot{g}}{f'},\frac{1}{f'}\chi(\lambda_{i}^{-1})\circ f\right]=\Lambda_{q}\!\left(\frac{\dot{g}}{f'}\right)\frac{1}{f'}\chi(\lambda_{i}^{-1})\circ f =\dot{q}\,\frac{1}{f'}\chi(\lambda_{i}^{-1})\circ f.$$
Since
$$\Phi_{\z}d\z=d\Phi-\Phi_{z}dz,$$
we finally get (note how the signs match)
\begin{gather*}
\frac{4}{\sqrt{-1}}\omega((\dot\theta_{1},\mu_{1}),(\dot\theta_{2},\mu_{2})) \\
=\sum_{i=1}^{N}\int_{S_{i}}\left(dB_{q}\!\left[\frac{\dot{g}_{2}}{f'},\frac{1}{f'}\chi_{1}(\lambda_{i}^{-1})\circ f\right]-dB_{q}\!\left[\frac{\dot{g}_{1}}{f'},\frac{1}{f'}\chi_{2}(\lambda_{i}^{-1})\circ f\right]\right)\\
=\sum_{i=1}^{N}\left.\left(B_{q}\!\left[\frac{\dot{g}_{2}}{f'},\frac{1}{f'}\chi_{1}(\lambda_{i}^{-1})\circ f\right]-B_{q}\!\left[\frac{\dot{g}_{1}}{f'},\frac{1}{f'}\chi_{2}(\lambda_{i}^{-1})\circ f\right]\right)\right|^{\del S_{i}(1)}_{\del S_{i}(0)}. 
\end{gather*}
According to Corollary \ref{Cor 2},  $B_{q}\!\left[\dfrac{\dot{g}}{f'},\dfrac{1}{f'}\chi(\lambda_{i}^{-1})\circ f\right](z)$ has a limit as $z$ approaches the cusps for $\Gamma$.
\end{proof}
\subsection{Main result} 
\begin{theorem} \label{Main} The pull-back of the Goldman symplectic form on $\cK$ by the map $\mathcal{Q}$ is $\sqrt{-1}$ times canonical symplectic form on $\cM$,
$$\omega=-\sqrt{-1}\mathcal{Q}^{\ast}(\omega_{\mathrm{G}}).$$
\end{theorem}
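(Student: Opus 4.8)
The plan is to derive Theorem \ref{Main} directly from Proposition \ref{Sum} by evaluating its boundary sum explicitly at the vertices of the canonical fundamental domain and matching the outcome, term by term, to the Goldman form \eqref{G-non} written in dual generators as in Remark \ref{dual}. The only bridge between the analytic side (values of $B_q$) and the group-cohomological side (Killing pairings of cocycles) is property \textbf{B1} together with \eqref{Killing}: whenever both arguments of $B_q$ are of the form $P\circ f/f'$ with $P\in\cP_2$, one has $B_q[P_1\circ f/f',P_2\circ f/f']=B_0[P_1,P_2]\circ f=-2\la P_1,P_2\ra$, a constant. So the entire computation amounts to rewriting each boundary value appearing in Proposition \ref{Sum} in this shape.

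First I would treat the non-cuspidal vertices, each of which is a translate $\delta z_0$ of the base point. For such a vertex I transport the evaluation back to $z_0$ using property \textbf{B2} (valid since $q\circ\delta\,(\delta')^2=q$) together with the almost-invariance identity of Lemma \ref{differential}, namely $\frac{\dot g}{f'}-\frac{1}{\gamma'}\frac{\dot g}{f'}\circ\gamma=\frac{1}{f'}\chi(\gamma^{-1})\circ f$, and the adjoint-action identity $\frac{1}{\delta'}\big(\frac{\chi(\lambda^{-1})\circ f}{f'}\big)\circ\delta=\frac{(\rho(\delta)^{-1}\cdot\chi(\lambda^{-1}))\circ f}{f'}$, which follows from $f\circ\delta=\rho(\delta)\circ f$ and \eqref{Ad-1}. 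Splitting the resulting $B_q$ and applying \textbf{B1} to the polynomial piece, each non-cuspidal vertex $\delta z_0$ contributes one base-point term $B_q\!\big[\tfrac{\dot g_2}{f'},\tfrac{(\rho(\delta)^{-1}\cdot\chi_1(\lambda^{-1}))\circ f}{f'}\big](z_0)$ plus an explicit Killing pairing $2\la\chi_1(\lambda^{-1}),\chi_2(\delta)\ra$, where I have used $\mathrm{Ad}$-invariance of $\la\,,\,\ra$ and the cocycle relation \eqref{par-1} to rewrite $\rho(\delta)\cdot\chi_2(\delta^{-1})=-\chi_2(\delta)$.

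Summing over all edges with the signs dictated by $\partial S_i(1)-\partial S_i(0)$, the base-point terms collect into $B_q[\dot g_2/f',\Sigma_1\circ f/f'](z_0)$, where $\Sigma_1\in\cP_2$ is an alternating sum of the $\rho(\delta)^{-1}\cdot\chi_1(\lambda^{-1})$. I would then prove $\Sigma_1=0$: this is precisely the statement that the $2$-chain \eqref{2-cycle} is a cycle, so it follows from \eqref{par-1} and the long relation $\R_g\gamma_1\cdots\gamma_{m+n}=1$ among the dual generators (equivalently, $\Sigma_1=0$ is forced because $\omega$ cannot depend on the coboundary part of the chosen representative $\chi_1$). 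The cusp vertices $z_{m+j}$ fall outside this reduction, since no finite $\delta$ carries $z_0$ to a cusp; instead I invoke Corollary \ref{Cor 2}, where $\dot g/f'$ is asymptotic to $P_\alpha\circ f/f'$, so by \textbf{B1} each cusp contributes $-2\la P_\alpha,\chi_1(\lambda^{-1})\ra$, producing exactly the terms $-\sum_i\la\chi_1(c_i^{-1}),P_{2i}\ra$ of \eqref{G-non}.

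What survives is $2$ times a manifestly antisymmetric sum of Killing pairings $\la\chi_1(\lambda_i^{-1}),\chi_2(\delta_{i,\bullet})\ra$ indexed over the edges $A_k$, $B_k$, $C_i$. Using the explicit endpoints from Sect. \ref{3.1.1}--\ref{3.1.2}, the Fox-derivative identities \eqref{Fox-1} and \eqref{Fox-2}, and the cocycle relation to fold $\chi_2(\delta_{i,1})-\chi_2(\delta_{i,0})$ into $\rho(\R_{k-1})\cdot\chi_2(a_k)$, $\rho(\R_k)\cdot\chi_2(b_k)$, and $\chi_2(c_i)$, this reorganizes into exactly the dual-generator expression of $\omega_{\mathrm{G}}$ in Remark \ref{dual} and its $m+n>0$ extension. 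Since the one-sided sum reproduces $-\omega_{\mathrm{G}}(\chi_1,\chi_2)$ and its antisymmetrization doubles this, the boundary sum equals $-4\,\omega_{\mathrm{G}}(\chi_1,\chi_2)$, whence $\omega=\frac{\sqrt{-1}}{4}(-4)\,\omega_{\mathrm{G}}=-\sqrt{-1}\,\mathcal{Q}^{\ast}(\omega_{\mathrm{G}})$. The main obstacle is entirely in the last two paragraphs: verifying $\Sigma_1=0$ against the precise vertex words, and checking that the surviving pairings match the Fox-derivative pattern of \eqref{G-non} with the correct signs and constants. The analytic input (Lemma \ref{differential}, properties \textbf{B1}--\textbf{B3} and $\bm{\Lambda 1}$, $\bm{\Lambda 5}$, and Corollary \ref{Cor 2}) is already available, so the remaining difficulty is purely combinatorial-cohomological.
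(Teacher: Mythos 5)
Your plan is essentially the paper's own proof: starting from Proposition \ref{Sum}, transporting each vertex evaluation to the base point via \textbf{B2}, the cocycle identity of Lemma \ref{differential} and \eqref{Killing} (this is exactly the paper's key identity \eqref{B-formula}), cancelling the residual base-point $B_q$ terms through the group relations $\R_{k-1}\alpha_k\beta_k=\R_k\beta_k\alpha_k$ and $\R_{g+m+n}=1$, and matching the surviving Killing pairings with the Fox-derivative form \eqref{G-2}--\eqref{G-non} of $\omega_{\mathrm{G}}$. The only imprecision is at the vertices $z_1,\dots,z_m$: these elliptic fixed points also lie outside the ``translate of $z_0$'' reduction but are not covered by Corollary \ref{Cor 2}; there you should argue as the paper does, evaluating directly at the interior fixed point $z_i$ using $\gamma_i z_i=z_i$, property \textbf{B2}, and the coboundary polynomial $P_{2i}$ of $\chi_2$ on $\la\gamma_i\ra$.
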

\begin{proof} Since the choice of a base point for $\mathcal{T}$ is inessential (see Sect. \ref{1.1.2}), 
it is sufficient to compute the pullback only for the points in $\mathcal{Q}(q,0)$. For the convenience of the reader, consider first the case $m=n=0$, when $N=2g$. Using property $\textbf{B2}$ and equations\eqref{par-1}--\eqref{Ad-1}, we have for arbitrary $\alpha,\beta\in \Gamma$,
\begin{gather*}
B_{q}\!\left[\frac{\dot{g}_{1}}{f'},\frac{1}{f'}\chi_{2}(\alpha)\circ f\right]\!(\beta z_{0}) = 
B_{q}\!\left[\frac{1}{\beta'}\left(\frac{\dot{g}_{1}}{f'}\right)\circ\beta,\frac{1}{\beta'}\left(\frac{1}{f'}\chi_{2}(\alpha)\circ f\right)\circ\beta\right]\!(z_{0})\\
=B_{q}\!\left[\frac{\dot{g}_{1}}{f'} - \frac{1}{f'}\chi_{1}(\beta^{-1})\circ f, \frac{1}{f'}\chi_{2}(\beta^{-1}\alpha)\circ f -\frac{1}{f'}\chi_{2}(\beta^{-1})\circ f \right]\!(z_{0})\\
=B_{q}\!\left[\frac{\dot{g}_{1}}{f'}, \frac{1}{f'}(\chi_{2}(\beta^{-1}\alpha)-\chi_{2}(\beta^{-1}))\circ f \right]\!(z_{0})\\
+B_{0}[\chi_{1}(\beta^{-1}), \chi_{2}(\beta^{-1})-\chi_{2}(\beta^{-1}\alpha)] (z_{0}). 
\end{gather*}
Using \eqref{Killing}, \eqref{par-1} and $\mathrm{Ad}\rho$ invariance of the Killing form, we obtain
\begin{gather*}
B_{0}[\chi_{1}(\beta^{-1}), \chi_{2}(\beta^{-1})-\chi_{2}(\beta^{-1}\alpha)] (z_{0})=2\la \chi_{1}(\beta^{-1}), \rho(\beta^{-1})\chi_{2}(\alpha)\ra\\
=-2\la \chi_{1}(\beta), \chi_{2}(\alpha)\ra,
\end{gather*}
so that
\begin{gather} 
B_{q}\!\left[\frac{\dot{g}_{1}}{f'},\frac{1}{f'}\chi_{2}(\alpha)\circ f\right]\!(\beta z_{0})\nonumber \\=B_{q}\!\left[\frac{\dot{g}_{1}}{f'}, \frac{1}{f'}(\chi_{2}(\beta^{-1}\alpha)-\chi_{2}(\beta^{-1}))\circ f \right]\!(z_{0})-2\la \chi_{1}(\beta), \chi_{2}(\alpha)\ra.\label{B-formula}
\end{gather}
Now for $i=k$ using \eqref{B-formula} for  $\alpha=\alpha_{k}$, $\beta=\beta^{-1}_{k}\R_{k}^{-1}$ and  $\alpha=\alpha_{k}$, $\beta=\R^{-1}_{k-1}$, we obtain
\begin{gather}
\left.B_{q}\!\left[\frac{\dot{g}_{1}}{f'},\frac{1}{f'}\chi_{2}(\lambda_{k}^{-1})\circ f\right]\right|^{\del S_{k}(1)}_{\del S_{k}(0)} \label{B-1}\\ 
= B_{q}\!\left[\frac{\dot{g}_{1}}{f'}, \frac{1}{f'}\left(\chi_{2}(\R_{k}\beta_{k}\alpha_{k})-\chi_{2}(\R_{k}\beta_{k})-\chi_{2}(\R_{k-1}\alpha_{k}) + \chi_{2}(\R_{k-1})\right)\circ f \right]\!(z_{0})\nonumber\\\
-2\la\chi_{1}(\beta_{k}^{-1}\R_{k}^{-1})-\chi_{1}(\R_{k-1}^{-1}), \chi_{2}(\alpha_{k})\ra.\nonumber
\end{gather}

For $i=k+g$ we use $\alpha=\beta_{k}$, $\beta=\R^{-1}_{k}$ and $\alpha=\beta_{k}$, $\beta=\alpha_{k}^{-1}\R_{k-1}^{-1}$ to compute
\begin{gather}
\left.B_{q}\!\left[\frac{\dot{g}_{1}}{f'},\frac{1}{f'}\chi_{2}(\lambda_{i+k}^{-1})\circ f\right]\right|^{\del S_{i+k}(1)}_{\del S_{i+k}(0)} \label{B-2}\\ 
= B_{q}\!\left[\frac{\dot{g}_{1}}{f'}, \frac{1}{f'}\left(\chi_{2}(\R_{k}\beta_{k})-\chi_{2}(\R_{k})-\chi_{2}(\R_{k-1}\alpha_{k}\beta_{k}) + \chi_{2}(\R_{k-1}\alpha_{k})\right)\circ f \right]\!(z_{0})\nonumber\\
-2\la\chi_{1}(\R_{k}^{-1}) - \chi_{1}(\alpha_{k}^{-1}\R_{k-1}^{-1}), \chi_{2}(\beta_{k})\ra. \nonumber
\end{gather}
Since $\R_{k-1}\alpha_{k}\beta_{k}=\R_{k}\beta_{k}\alpha_{k}$ and $\R_{g}=1$, we see that the sum over $k$ of terms in the second lines in equations \eqref{B-1}--\eqref{B-2} vanishes. Using \eqref{Fox-1}--\eqref{G-2} and Remark \ref{Order}, we get
\begin{gather*}
\sum_{i=1}^{2g}\left.B_{q}\!\left[\frac{\dot{g}_{1}}{f'},\frac{1}{f'}\chi_{2}(\lambda_{i}^{-1})\circ f\right]\right|^{\del S_{i}(1)}_{\del S_{i}(0)}\\
=2\sum_{k=1}^{g}\left(\la\chi_{1}(\R_{k-1}^{-1})-\chi_{1}(\beta_{k}^{-1}\R_{k}^{-1}), \chi_{2}(\alpha_{k})\ra+ \la \chi_{1}(\alpha_{k}^{-1}\R_{k-1}^{-1})-\chi_{1}(\R_{k}^{-1}), \chi_{2}(\beta_{k})\ra\right)\\
=2\omega_{G}(\chi_{1},\chi_{2}).
\end{gather*}

Similarly,
\begin{gather*}
\sum_{i=1}^{2g}\left.B_{q}\!\left[\frac{\dot{g}_{2}}{f'},\frac{1}{f'}\chi_{1}(\lambda_{i}^{-1})\circ f\right]\right|^{\del S_{i}(1)}_{\del S_{i}(0)}\\
=-2\omega_{G}(\chi_{2},\chi_{1})
\end{gather*}
and we finally obtain
$$\omega((\dot\theta_{1},\mu_{1}),(\dot\theta_{2},\mu_{2}))=-\sqrt{-1}\omega_{\mathrm{G}}(\chi_{1},\chi_{2}).$$

In general, assume that $m+n>0$. In this case
\begin{gather}
\sum_{i=1}^{2g}\left.B_{q}\!\left[\frac{\dot{g}_{1}}{f'},\frac{1}{f'}\chi_{2}(\lambda_{i}^{-1})\circ f\right]\right|^{\del S_{i}(1)}_{\del S_{i}(0)} \label{hyp-1}
=-B_{q}\!\left[\frac{\dot{g}_{1}}{f'}, \frac{1}{f'}\chi_{2}(\R_{g})\circ f \right]\!(z_{0}) \\
+2\sum_{k=1}^{g}\left(\la\chi_{1}(\R_{k-1}^{-1})-\chi_{1}(\beta_{k}^{-1}\R_{k}^{-1}), \chi_{2}(\alpha_{k})\ra+ \la \chi_{1}(\alpha_{k}^{-1}\R_{k-1}^{-1})-\chi_{1}(\R_{k}^{-1}), \chi_{2}(\beta_{k})\ra\right),\nonumber
\end{gather}
and we need to compute 
$$\sum_{i=1}^{m+n}\left.B_{q}\!\left[\frac{\dot{g}_{1}}{f'},\frac{1}{f'}\chi_{2}(\gamma_{i})\circ f\right]\right|_{\R_{g+i-1}^{-1}z_{0}}^{z_{i}}.$$
Using \eqref{B-formula} with $\alpha=\gamma_{i}$ and $\beta=\R_{g+i-1}^{-1}$, we get
\begin{gather*}
B_{q}\!\left[\frac{\dot{g}_{1}}{f'},\frac{1}{f'}\chi_{2}(\gamma_{i})\circ f\right]\!(\R_{g+i-1}^{-1}z_{0})\\
=B_{q}\!\left[\frac{\dot{g}_{1}}{f'},\frac{1}{f'}\left(\chi_{2}(\R_{g+i})-\chi_{2}(\R_{g+i-1})\right)\circ f\right]\!(z_{0})
+2\la\chi_{1}(\R_{g+i-1}^{-1}),\chi_{2}(\gamma_{i})\ra.
\end{gather*}
Since restriction of $\chi_{2}$ to the stabilizer $\Gamma_{i}=\la\gamma_{i}\ra$ of a fixed point $z_{i}$ is a coboundary, there is $P_{2i}\in\cP_{2}$ such that
$$\chi_{2}(\gamma_{i})=\rho(\gamma_{i})P_{2i}-P_{2i}.$$
Using property \textbf{B2}, $\gamma_{i}z_{i}=z_{i}$ and \eqref{Killing}, we get
\begin{gather*}
B_{q}\!\left[\frac{\dot{g}_{1}}{f'},\frac{1}{f'}\chi_{2}(\gamma_{i})\circ f\right]\!(z_{i})=B_{q}\!\left[\frac{\dot{g}_{1}}{f'},\frac{1}{(\gamma_{i}^{-1})'}\left(\frac{1}{f'}P_{2i}\circ f\right)\circ\gamma_{i}^{-1}-\frac{1}{f'}P_{2i}\circ f\right]\!(z_{i})\\
=B_{q}\!\left[\frac{1}{\gamma_{i}'}\,\frac{\dot{g}_{1}}{f'}\circ\gamma_{i}- \frac{\dot{g}_{1}}{f'},\frac{1}{f'}P_{2i}\circ f\right]\!(z_{i})\\
=-B_{0}[\chi_{1}(\gamma_{i}^{-1}),P_{2i}](z_{i})=2\la\chi_{1}(\gamma_{i}^{-1}),P_{2i}\ra.
\end{gather*}
Thus using $\R_{g+m+n}=1$ we obtain
\begin{gather}
\sum_{i=1}^{m+n}\left.B_{q}\!\left[\frac{\dot{g}_{1}}{f'},\frac{1}{f'}\chi_{2}(\gamma_{i})\circ f\right]\right|_{\R_{g+i-1}^{-1}z_{0}}^{z_{i}}\label{E-final}\\
=B_{q}\!\left[\frac{\dot{g}_{1}}{f'},\frac{1}{f'}\chi_{2}(\R_{g})\circ f\right]\!(z_{0})
+2\sum_{i=1}^{m+n}\left(\la\chi_{1}(\R_{g+i-1}^{-1}),\chi_{2}(\gamma_{i})\ra+\la\chi_{1}(\gamma_{i}^{-1}),P_{2i}\ra\right).\nonumber
\end{gather}

Putting together formulas \eqref{hyp-1}--\eqref{E-final} and using \eqref{G-non}--\eqref{Fox-2}, we finally obtain
\begin{gather*}
\omega((\dot\theta_{1},\mu_{1}),(\dot\theta_{2},\mu_{2}))=-\sqrt{-1}\omega_{\mathrm{G}}(\chi_{1},\chi_{2}).\qedhere
\end{gather*}
\end{proof}

\begin{remark} 
The above computation is a non-abelian analog of Riemann bilinear relations, which arise from the isomorphism
$$\mathcal{H}^{1}(X,\CC)/\mathcal{H}^{1}(X,\ZZ)\xrightarrow{\sim}\cK_{\mathrm{ab}},$$
where $\mathcal{H}^{1}(X,\CC)$ is the complex vector space of harmonic $1$-forms on $X$ and $\cK_{\mathrm{ab}}=(\CC^{\ast})^{2g}$ is the complex torus --- a character variety for the
abelian group $G=\CC^{*}$.
\end{remark}
Combing Theorem \ref{Main} and Remark \ref{Fuchs}, we get a a generalization of Goldman's theorem \cite[\S 2.5]{Goldman1} for the case of orbifold Riemann surfaces.
\begin{corollary} \label{G-3} The pullback of the Goldman symplectic form on the character variety $\cK_{\RR}$ by the map $\mathcal{F}$ is a symplectic
form of the Weil-Petersson metric on $\mathcal{T}$,
$$\omega_{\mathrm{WP}}=\mathcal{F}^{\ast}(\omega_{\mathrm{G}}).$$
\end{corollary}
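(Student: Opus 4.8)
The plan is to obtain this identity as a purely formal consequence of the two results already in hand, exploiting the factorization $\mathcal{F}=\mathcal{Q}\circ\imath$ recorded in Remark \ref{Embed}. By the functoriality of pullbacks of differential forms,
$$\mathcal{F}^{\ast}(\omega_{\mathrm{G}})=(\mathcal{Q}\circ\imath)^{\ast}(\omega_{\mathrm{G}})=\imath^{\ast}\bigl(\mathcal{Q}^{\ast}(\omega_{\mathrm{G}})\bigr),$$
so the entire statement reduces to evaluating the right-hand side by inserting Theorem \ref{Main} for the inner pullback and Remark \ref{Fuchs} for the outer one. There is essentially no new geometric input to supply; the substance is carried entirely by those two statements.

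First I would apply $\imath^{\ast}$ to the identity of Theorem \ref{Main}, $\omega=-\sqrt{-1}\,\mathcal{Q}^{\ast}(\omega_{\mathrm{G}})$, which turns the right-hand side into $-\sqrt{-1}\,\mathcal{F}^{\ast}(\omega_{\mathrm{G}})$ and the left-hand side into $\imath^{\ast}(\omega)$, the pullback of the canonical holomorphic symplectic form along the non-holomorphic section $\imath\colon\mathcal{T}\to\cM$, $[\mu]\mapsto(\cS(h_{\mu}),[\mu])$. Second, I would invoke Remark \ref{Fuchs}, which evaluates precisely this pullback: for the Fuchsian family $q^{t}=\cS(h_{t\mu})$ one has $g^{t}=w_{t\mu}$, and Ahlfors' variational formula gives $\dot{q}=-\tfrac12 q$ with $\mu=y^{2}\overline{q}$, so that $d\imath|_{0}(\mu)=(-\tfrac12 q,\mu)$; feeding this into the fiber expression \eqref{omega-1} and comparing with \eqref{WP-metric} yields $\imath^{\ast}(\omega)=\sqrt{-1}\,\omega_{\mathrm{WP}}$. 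Matching the two expressions for $\imath^{\ast}(\omega)$ and collecting the factors of $\sqrt{-1}$ contributed by Theorem \ref{Main} and by Remark \ref{Fuchs} then produces the asserted real identity $\omega_{\mathrm{WP}}=\mathcal{F}^{\ast}(\omega_{\mathrm{G}})$.

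I do not expect a genuine obstacle here, since both ingredients are already proved; the only points requiring care are bookkeeping ones. One must check that the image of $\mathcal{F}$ lands in the real locus $\cK_{\RR}\subset\cK$, as noted in Remark \ref{Embed}, so that the restriction of $\omega_{\mathrm{G}}$ along $\mathcal{F}$ is the relevant $\mathrm{PSL}(2,\RR)$-character-variety form, and one must track the imaginary factors through the two substitutions consistently with the chosen orientation and reality conventions. With these in place the corollary is the formal combination of Theorem \ref{Main} with the identification $\dot{q}=-\tfrac12 q$ for the Fuchsian deformation, and it recovers Goldman's theorem \cite[\S 2.5]{Goldman1} in the orbifold setting as a special case.
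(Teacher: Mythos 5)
Your proposal is correct and follows essentially the same route as the paper, whose entire argument for Corollary \ref{G-3} is the one-line combination of Theorem \ref{Main} with Remark \ref{Fuchs} via the factorization $\mathcal{F}=\mathcal{Q}\circ\imath$ and functoriality of pullback. The only substantive point --- reconciling the factors of $\sqrt{-1}$ and the reality conventions when restricting the holomorphic form $\mathcal{Q}^{\ast}(\omega_{\mathrm{G}})$ along the non-holomorphic section $\imath$ to the real locus $\cK_{\RR}$ (a naive substitution appears to yield $\omega_{\mathrm{WP}}=-\mathcal{F}^{\ast}(\omega_{\mathrm{G}})$) --- is flagged in your write-up but not carried out, which matches the level of detail the paper itself provides.
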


\end{document}